\tikzstyle{vertex}=[circle, draw, inner sep=0pt, minimum size=6pt]
\newcommand{\vertex}{\node[vertex]}
\newtheorem{thm}{Theorem}[section]
\newtheorem{lem} [thm]{Lemma}
\newtheorem{prop} [thm]{Proposition}
\theoremstyle{definition} 
\newtheorem{ex}[thm]{Example}
\newtheorem{rmk}[thm] {Remark}
\newtheorem{defn}[thm]{Definition}
\newtheorem{obs}[thm]{Observation}
\raggedbottom \pagestyle{myheadings} \hbadness = 10000 \tolerance = 10000
\numberwithin{equation}{section}
\begin{document}
	\label{'ubf'}
	\setcounter{page}{1} 

	\markboth {\hspace*{-9mm} \centerline{\footnotesize \sc
			Mycielskian of Signed Graphs}
	}
	{ \centerline {\footnotesize \sc  Albin,  Germina
			}
	}
	\begin{center}
		{
			\Large \textbf{Mycielskian of  Signed Graphs
				}
			}

			\bigskip
		Albin Mathew \footnote{\small Department of Mathematics, Central University of Kerala, Kasaragod - 671316,\ Kerala,\ India. \textbf{Email:}\texttt{ albinmathewamp@gmail.com, }}\, Germina K A \footnote{\small Department of Mathematics, Central University of Kerala, Kasaragod - 671316,\ Kerala,\ India. \textbf{Email:}\texttt{ srgerminaka@gmail.com}}\,
			\\

\end{center}

\thispagestyle{empty}
\begin{abstract}
 In this paper, we define the Mycielskian of a signed graph and discuss the properties of  balance and switching in the Mycielskian of a given signed graph. We provide a condition for ensuring the Mycielskian of a balanced signed graph remains balanced, leading to the construction of a balanced Mycielskian. We establish a relation between the chromatic numbers of a signed graph and its Mycielskian. We also study the structure of different matrices related to the Mycielskian of a signed graph.
\end{abstract}

\textbf{Keywords:} Signed graph, Signed graph coloring, Mycielskian of a signed graph.

\textbf{Mathematics Subject Classification (2020):}  Primary 05C22, Secondary  05C15, 05C50.


\section{Introduction}\label{sec1}

A signed graph $\Sigma=(G,\sigma)$ consists  of an underlying graph $G=(V,E)$, together with a function $\sigma:E\rightarrow \{-1,1\}$, called the signature or sign function. The sign of a cycle in a signed graph is the product of the signs of  its edges. A signed graph $\Sigma$ is said to be balanced if no negative cycles exist, otherwise $\Sigma$ is unbalanced. A signed graph is called all-positive (all-negative)  if all the edges are positive (negative).

A switching function for $\Sigma$ is a function $\zeta:V(\Sigma)\rightarrow\{-1,1\}$. For an edge $e=uv$ in $\Sigma$, the switched signature $\sigma^\zeta$ is defined as $\sigma^\zeta(e)=\zeta(u)\sigma(e)\zeta(v)$,  and the switched signed graph is  $\Sigma^\zeta=(G,\sigma^\zeta)$  (see \cite[Section 3]{tz}). The signs of cycles are unchanged by switching, and any balanced signed graph can be switched to an all-positive signed graph. If one signed graph can be switched from the other, they are said to be switching equivalent. Two signed graphs $\Sigma_1$ and $\Sigma_2$ are said to be switching isomorphic if $\Sigma_1$ is isomorphic to a switching of $\Sigma_2$.

The net-degree of a vertex $v$ in a signed graph $\Sigma$, denoted by $d_\Sigma^\pm(v)$ is defined as $d_\Sigma^\pm(v)=d_\Sigma^+(v)-d_\Sigma^-(v)$, where $d_\Sigma^+(v)$ and $d_\Sigma^-(v)$ respectively denotes the number of positive and negative edges incident with $v$ in $\Sigma$. The total number of edges incident with $v$ in $\Sigma$ is denoted by $d_\Sigma(v)$ and $d_\Sigma(v)=d_\Sigma^+(v)+d_\Sigma^-(v)$.

Throughout this paper, we consider only finite, simple, connected and undirected graphs and signed graphs. For the standard notation and terminology in graphs  and signed graphs  not given here, the reader may refer to \cite{fh} and \cite{tz1,tz3} respectively.

The Mycielski construction of a simple graph was introduced by J. Mycielski \cite{my} in his search for  triangle-free graphs with arbitrarily large chromatic number. The Mycielskian for a finite, simple, connected graph $G=(V,E)$ is defined as follows.
\begin{defn}\label{def0}\cite{br}
	The Mycielskian $M(G)$ of $G$ is a graph whose vertex set is the disjoint union $V\cup V'\cup \{w\}$, where $V'=\{v':v\in V\}$, and whose edge set is $E\cup \{u'v:uv\in E\}\cup \{v'w:v'\in V'\}$. The vertex $w$ is called the root of $M(G)$ and $v'\in V'$ is called the twin of $v$ in $M(G)$.
\end{defn}

\subsection{Mycielskian of signed  graphs}\label{sec1.1}
  Motivated from the Definition~\ref{def0}, we define the Mycielskian $ M(\Sigma)$ of the signed graph $\Sigma$ as follows.
\begin{defn}[Mycielskian]\label{def1}
		The Mycielskian of $\Sigma$ is the signed graph  $M(\Sigma)=(M(G),\sigma_M)$, where $M(G)$  is the Mycielskian of the underlying graph $G$ of $\Sigma$, and the signature function $\sigma_M$ is defined as $\sigma_M(uv)=\sigma_M(u'v)=\sigma(uv)$ and $\sigma_M(v'w)=1$
\end{defn}
The following are some immediate observations.
\begin{obs}
	Let $\Sigma$ be a signed graph with $p$ vertices and $q$ edges and let $M(\Sigma)$ be its Mycielskian. Then, we have the following.
	\begin{enumerate}
		\item [\rm{(i)}] $M(\Sigma)$ has $2p+1$ vertices and $3q+p$ edges.
		\item [\rm{(ii)}] If $\Sigma$ contains $r$ positive edges and $q-r$ negative edges, then $M(\Sigma)$ contains $3r+p$ positive  edges and $3(q-r)$ negative edges.
		\item[\rm{(iii)}] If $\Sigma$ is  triangle-free, then $M(\Sigma)$ is also triangle-free.
		\item [\rm{(iv)}] For each vertex $v\in V$, $d^\pm_{M(\Sigma)}(v)=2d^\pm_\Sigma(v)$ and $d_{M(\Sigma)}(v)=2d_\Sigma(v)$.
		\item [\rm{(v)}] For each vertex $v'\in V'$, $d^\pm_{M(\Sigma)}(v')=d^\pm_\Sigma(v)+1$ and $d_{M(\Sigma)}(v')=d_\Sigma(v)+1$ .
		\item [\rm{(vi)}] $d^\pm_{M(\Sigma)}(w)=d_{M(\Sigma)}(w)=p$.
	\end{enumerate}
\end{obs}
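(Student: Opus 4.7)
The plan is to treat each of the six items as a direct counting exercise built on top of Definition~\ref{def1}. The key structural remark is that $E(M(\Sigma))$ partitions naturally into three disjoint classes: the original edges $E$, the crossing edges $\{u'v : uv \in E\}$ (which, since an edge $uv\in E$ yields both $u'v$ and $v'u$, has size $2q$), and the $p$ root edges $\{v'w : v\in V\}$. By Definition~\ref{def1}, the signature on the first two classes copies the signature of $\Sigma$, while every root edge is positive. With this partition in hand, (i) is just $|V|+|V'|+|\{w\}|=2p+1$ and $q+2q+p=3q+p$; for (ii), the $r$ positive edges of $\Sigma$ lift to $3r$ positive edges via the first two classes, the $p$ root edges contribute another $p$ positive edges, and the $q-r$ negative edges yield exactly $3(q-r)$ negative edges, with no negative root edge.

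For (iii), I would mimic the classical Mycielski argument, noting that $M(\Sigma)$ and $M(G)$ share an underlying graph, so it suffices to rule out triangles in $M(G)$ when $G$ is triangle-free. Since $V'$ is an independent set in $M(G)$ and $w$ is adjacent only to vertices of $V'$, any triangle in $M(G)$ uses at most one vertex of $V'$ and cannot use $w$. Thus a triangle must either sit entirely in $V$, giving a triangle in $G$, or have vertices $v', x, y$ with $x, y\in V$; in the latter case the edges $v'x, v'y$ force $vx, vy\in E(G)$, and $xy\in E(G)$ then yields a triangle $vxy$ in $G$. Both possibilities contradict the hypothesis.

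The degree statements (iv)--(vi) require only enumerating neighbors and separating them by sign. For $v\in V$, every neighbor $u$ in $\Sigma$ contributes two $M(\Sigma)$-edges, namely $uv$ and $u'v$, each carrying sign $\sigma(uv)$; summing over positive and negative neighbors gives $d^{\pm}_{M(\Sigma)}(v)=2d^{\pm}_{\Sigma}(v)$ and $d_{M(\Sigma)}(v)=2d_{\Sigma}(v)$. For $v'\in V'$, its $M(\Sigma)$-neighbors are exactly the $\Sigma$-neighbors of $v$ (with matching signs) together with $w$ through a positive edge, so the positive degree rises by one while the negative degree is unchanged, yielding $d^{\pm}_{M(\Sigma)}(v')=d^{\pm}_{\Sigma}(v)+1$ and $d_{M(\Sigma)}(v')=d_{\Sigma}(v)+1$. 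Finally, $w$ is incident precisely to the $p$ positive root edges, giving (vi). None of these steps presents a real obstacle; the only point demanding a little care is (iii), where a pure count is replaced by the case analysis above.
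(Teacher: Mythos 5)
Your proof is correct. The paper states these six items as ``immediate observations'' and supplies no proof at all, so there is nothing to diverge from: your direct counting from Definition~\ref{def1} (partitioning $E(M(\Sigma))$ into the original edges, the $2q$ crossing edges, and the $p$ positive root edges) together with the short triangle case analysis for (iii) is exactly the routine verification the authors left implicit, and every step checks out.
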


Note that one can define the signature function for the Mycielskian of a signed graph in other ways. In this paper, we initiate a study on Mycielskian of a signed graph using this particular definition.

This particular construction of Mycielskian of a signed graph is illustrated in Example~\ref{ex1}.

\begin{ex}\label{ex1}	Let $\Sigma$ be the negative cycle $C_4^-$. The Mycielskian of  $C_4^-$ is constructed in Figure \ref{fig:sub2}.
	\begin{figure}[h!]	\centering
		\begin{subfigure}{.5\textwidth}
			\centering
			\begin{tikzpicture}[x=0.55cm, y=0.55cm]
				\vertex[fill] (v) at (0,0) [label=left:$v_1$] {};
				\vertex[fill] (w) at (4,0) [label=right:$v_2$] {};
				\vertex[fill] (x) at (0,4) [label=left:$v_4$] {};
				\vertex[fill] (y) at (4,4) [label=right:$v_3$] {};
				\path[very thick, dotted]
				(v) edge (w)

				;
				\path
				(x) edge (v)
				(w) edge (y)
				(y) edge (x)
				;
			\end{tikzpicture}
			\caption{$\Sigma$}
			\label{fig:sub1}
		\end{subfigure}%
		\begin{subfigure}{.5\textwidth}
			\centering
			\begin{tikzpicture}[x=0.55cm, y=0.55cm]
				\vertex[fill] (v1) at (0,0) [label=left:$v_1$] {};
				\vertex[fill] (v2) at (2,0) [label=above:$v_2$] {};
				\vertex[fill] (v3) at (4,0) [label=above:$v_3$] {};
				\vertex[fill] (v4) at (6,0) [label=right:$v_4$] {};
				\vertex[fill] (v1') at (0,-2) [label=left:$v_1'$] {};
				\vertex[fill] (v2') at (2,-2) [label=right:$v_2'$] {};
				\vertex[fill] (v3') at (4,-2) [label=right:$v_3'$] {};
				\vertex[fill] (v4') at (6,-2) [label=right:$v_4'$] {};
				\vertex[fill] (w) at (3,-4) [label=below:$w$] {};

				\path[very thick, dotted]
				(v1) edge (v2)
				(v1) edge (v2')
				(v1') edge (v2)
				;
				\path
				(v2) edge (v3)
				(v3) edge (v4)
				(v2') edge (v3)
				(v2) edge (v3')
				(v3') edge (v4)
				(v3) edge (v4')
				(v1') edge (v4)
				(v1) edge (v4')
				(v1') edge (w)
				(v2') edge (w)
				(v3') edge (w)
				(v4') edge (w)
				;
				\draw[] (0,0) .. controls (3,2.3) .. (6, 0);
			\end{tikzpicture}
			\caption{$M(\Sigma)$}
			\label{fig:sub2}
		\end{subfigure}
		\caption{A signed graph and its Mycielskian.}
		\label{fig:test1}
	\end{figure}
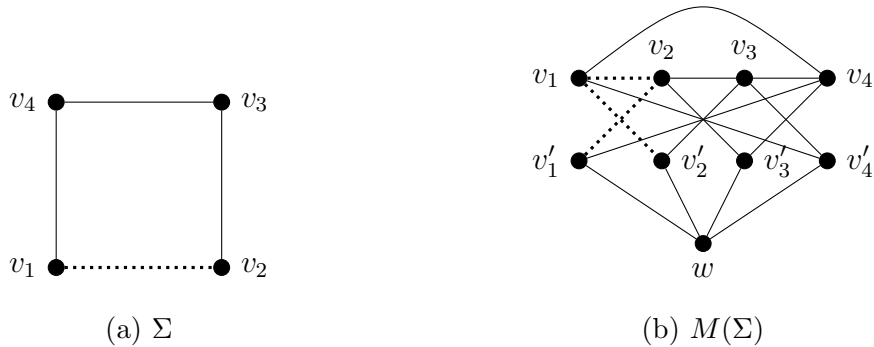
\end{ex}

\section{Balance and switching in Mycielskian of  signed graphs}\label{sec2}
Balance and switching are two important concepts in signed graph theory.

In this section, we establish how the signed graph and its Mycielskian are related with respect to balance and switching.
One may note that if $\Sigma$ is unbalanced, then $M(\Sigma)$ is unbalanced. Also, in general, for a balanced signed graph $\Sigma$, the Mycielskian $M(\Sigma)$ need not be balanced.

The following is a characterization for $M(\Sigma)$ to be balanced.

\begin{prop}\label{ag4}
	The Mycielskian $M(\Sigma)$ is balanced if and only if $\Sigma$ is all-positive.
\end{prop}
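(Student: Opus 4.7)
The proposition is equivalent, via the contrapositive on one side, to the statement: $M(\Sigma)$ contains a negative cycle if and only if $\Sigma$ contains a negative edge. My plan is to establish the two directions separately, with the easy direction coming from a direct inspection of the signature function and the nontrivial direction coming from exhibiting an explicit negative cycle in $M(\Sigma)$ whenever $\Sigma$ has a negative edge.

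For the ``if'' direction, I would observe that if $\Sigma$ is all-positive, then $\sigma(e)=1$ for every $e\in E$, so by Definition~\ref{def1} we have $\sigma_M(uv)=\sigma_M(u'v)=\sigma(uv)=1$ for every edge coming from $E$, and $\sigma_M(v'w)=1$ by definition. Hence $M(\Sigma)$ itself is all-positive and therefore balanced. This is essentially just unpacking the definition.

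For the ``only if'' direction, I would argue by contrapositive: assume $\Sigma$ is not all-positive and pick a negative edge $e=uv\in E$, i.e.\ $\sigma(uv)=-1$. The key observation is that a single negative edge of $\Sigma$ gives rise to \emph{three} negative edges in $M(\Sigma)$, namely $uv$, $u'v$, and $v'u$, all of which carry the sign $\sigma(uv)=-1$. The aim is to splice these three negative edges into a cycle together with some of the always-positive edges $x'w$ incident with the root. The natural candidate is the $5$-cycle
\[
u \;-\; v \;-\; u' \;-\; w \;-\; v' \;-\; u,
\]
whose five vertices are pairwise distinct (since $u\neq v$ forces $u'\neq v'$, and $w$ as well as the twins lie in distinct parts of the partition $V\cup V'\cup\{w\}$). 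Checking the edges against Definition~\ref{def1}: $uv$ and $vu'$ and $v'u$ all come from $E$ and inherit sign $-1$, while $u'w$ and $wv'$ have sign $+1$. The product of signs is $(-1)^3(+1)^2=-1$, so this is a negative cycle, and $M(\Sigma)$ is unbalanced.

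The only place that requires a moment's care is the verification that the claimed $5$-cycle is actually a cycle, i.e.\ that all five vertices are distinct and each consecutive pair is genuinely adjacent in $M(G)$; both are immediate from the vertex partition and from the edge set in Definition~\ref{def0}. There is no real obstacle beyond writing this down cleanly.
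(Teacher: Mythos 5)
Your proposal is correct and matches the paper's proof essentially verbatim: the forward direction is the same observation that an all-positive $\Sigma$ yields an all-positive $M(\Sigma)$, and the converse exhibits exactly the same negative $5$-cycle $v_iv_jv_i'wv_j'v_i$ (your $u\,v\,u'\,w\,v'\,u$) built from the three negative copies of a single negative edge together with two positive root edges. Your extra care in checking that the five vertices are distinct is a slight improvement in rigor over the paper's one-line argument, but the route is identical.
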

\begin{proof}
	If $\Sigma$ is all-positive, then so is $M(\Sigma)$, and hence is balanced. Conversely, If $\Sigma$ has at least one negative edge, say $v_iv_j$, then $v_iv_jv_i'w  v_j'v_i$ forms a negative $5$ - cycle in $M(\Sigma)$, making it unbalanced.
\end{proof}

Consider any balanced signed graph $\Sigma$ which is not all-positive. Then $\Sigma$ can be switched to an all-positive signed graph, say $\Sigma'$. By Proposition~\ref{ag4}, $M(\Sigma)$ is not balanced, but $M(\Sigma')$ is balanced. Hence, the Mycielskians of two switching equivalent signed graphs need not to be switching  equivalent.\\

The Mycielskian of an unbalanced signed graph is always unbalanced. However, for a balanced signed graph $\Sigma$, the Mycielskian $M(\Sigma)=(M(G),\sigma_M)$ can be made balanced by modifying the signature function $\sigma_M$. Though there are several ways to do so, to remain consistent with our original definition, we only look for changes that can be made in the signature of the edges incident to the root vertex $w$ which makes the Mycielskian balanced, and leave the signatures of the other edges unchanged.

We need the following theorem \cite{fh1}.
\begin{thm}[Harary's bipartition theorem \cite{fh1}]\label{Harary}
	A signed graph $\Sigma$ is balanced if and only if there is a bipartition of its vertex set, $V = V_1 \cup V_2$, such that every positive edge is induced by $V_1$ or $V_2$ while every negative edge has one endpoint in $V_1$ and one in $V_2$. The bipartition $V = V_1 \cup V_2$ is called a Harary bipartition for $\Sigma$.
\end{thm}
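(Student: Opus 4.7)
The plan is to prove the equivalence directly in both directions, leveraging the switching characterization of balance already stated in the introduction, namely that every balanced signed graph can be switched to an all-positive signed graph.

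For the "only if" direction, suppose $\Sigma$ is balanced. Choose a switching function $\zeta\colon V\to\{-1,1\}$ such that $\sigma^\zeta\equiv 1$. Set $V_1=\zeta^{-1}(1)$ and $V_2=\zeta^{-1}(-1)$. For any edge $e=uv$ the identity $\sigma^\zeta(e)=\zeta(u)\sigma(e)\zeta(v)=1$ forces $\sigma(e)=\zeta(u)\zeta(v)$, which equals $+1$ exactly when $u,v$ lie in the same part of the bipartition and $-1$ exactly when they lie in different parts. This is precisely the required Harary bipartition.

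For the "if" direction, suppose such a bipartition $V=V_1\cup V_2$ exists, and let $C=v_1v_2\cdots v_kv_1$ be any cycle. By hypothesis, an edge of $C$ is negative precisely when its two endpoints lie in different parts. As we traverse $C$, each negative edge toggles the current part, while each positive edge leaves it unchanged. Since $C$ is closed, the total number of toggles, and hence the number of negative edges on $C$, must be even. Therefore the product of the edge signs on $C$ equals $+1$, every cycle is positive, and $\Sigma$ is balanced.

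The main obstacle would be justifying the existence of the switching function $\zeta$ in the forward direction if one could not appeal to the remark in the introduction. In that case I would construct $\zeta$ explicitly: fix a spanning tree $T$ of each component, pick a root $v_0$, set $\zeta(v_0)=1$, and extend along $T$ by $\zeta(v)=\zeta(u)\sigma(uv)$ for each tree edge $uv$. The balance hypothesis then enters through the fundamental cycles, where a parity argument guarantees $\sigma^\zeta(e)=1$ also for every non-tree edge $e$. Either way, the argument for the converse direction is entirely routine once the cycle-traversal parity observation is made.
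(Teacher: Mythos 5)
The paper does not prove this statement at all: it is quoted verbatim as a classical result of Harary \cite{fh1}, so there is no in-paper argument to compare against. Your proof is correct and is essentially the standard one. The converse direction (bipartition implies balance) via the parity of part-toggles around a closed walk is exactly right. For the forward direction you are right to be suspicious of simply invoking the introduction's remark that every balanced signed graph can be switched to an all-positive one --- that remark is itself logically equivalent to Harary's theorem (given a Harary bipartition, the function $\zeta$ that is $+1$ on $V_1$ and $-1$ on $V_2$ is precisely a switching to all-positive, and conversely), so using it as a black box would make the forward direction circular. Your fallback resolves this properly: defining $\zeta$ along a spanning tree by $\zeta(v)=\zeta(u)\sigma(uv)$ makes every tree edge positive after switching, and since switching preserves the sign of every fundamental cycle, balance forces each non-tree edge to become positive as well. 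With that construction spelled out, the argument is self-contained and complete.
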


Note that if $V = V_1 \cup V_2$ is a Harary bipartition for $\Sigma$, then every path in $\Sigma$ joining vertices in $V_1$ (similarly $V_2$) is positive, and every path between $V_1$ and $V_2$ is negative.

Theorem~\ref{ag1} provides a method to construct a balanced Mycielskian signed graph from a balanced signed graph.
\begin{thm}\label{ag1}
	Let $\Sigma$ be a balanced signed graph and $M(\Sigma)=(M(G),\sigma_M)$ be its Mycielskian. If $\sigma_M'$ is a signature function satisfying $\sigma_M'=\sigma_M$ on $M(G)\backslash \{w\}$ and  satisfies the relation $\sigma_M'(v_i'w)\sigma_M'(v_j'w)=\sigma(v_iv_j)$ for every edge $v_iv_j$ in $\Sigma$, then  the signed graph $M'(\Sigma)=(M(G),\sigma'_M)$ is balanced.
\end{thm}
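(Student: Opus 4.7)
The plan is to exhibit a switching function $\eta$ on $V(M(G))$ that makes $(M'(\Sigma))^\eta$ all-positive; since switching preserves the signs of cycles, this immediately gives that $M'(\Sigma)$ is balanced.

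The key observation is that the hypothesis on $\sigma_M'$ is already a switching condition in disguise. I would define $\zeta:V\to\{-1,1\}$ by $\zeta(v):=\sigma_M'(v'w)$; then the prescribed relation $\sigma_M'(v_i'w)\sigma_M'(v_j'w)=\sigma(v_iv_j)$ reads $\zeta(v_i)\zeta(v_j)=\sigma(v_iv_j)$ on every edge of $\Sigma$, so $\zeta$ is precisely a switching of $\Sigma$ to an all-positive signed graph (such a $\zeta$ must exist because $\Sigma$ is balanced, and conversely no such $\sigma_M'$ could exist otherwise). I would then lift $\zeta$ to $V(M(G))$ by setting $\eta(v):=\zeta(v)$ for $v\in V$, $\eta(v'):=\zeta(v)$ for the twin $v'\in V'$, and $\eta(w):=1$.

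The verification that $(M'(\Sigma))^\eta$ is all-positive then splits into the three edge types of $M(G)$: for an original edge $uv$ the switched sign is $\zeta(u)\sigma(uv)\zeta(v)=1$; for a cross edge $u'v$ it is again $\zeta(u)\sigma(uv)\zeta(v)=1$, using $\eta(u')=\zeta(u)$ and $\sigma_M'(u'v)=\sigma(uv)$; and for a spoke $v'w$ it is $\zeta(v)\,\sigma_M'(v'w)\cdot 1=\zeta(v)^2=1$. Each case is a one-line algebraic check using $\sigma(uv)=\zeta(u)\zeta(v)$.

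There is no substantive obstacle in this proof; the only real piece of insight is recognising that the hypothesis on $\sigma_M'(v'w)$ has been designed to encode a switching function on $V$, and that the twin copies $v'$ should be assigned the same value $\zeta(v)$ as their originals so that cross edges behave just like original edges. An equivalent presentation via Theorem~\ref{Harary} would take the bipartition of $V(M(G))$ in which $v$ and $v'$ both lie on the side determined by $\zeta(v)$ and $w$ is placed with the $+1$ side; one checks directly that every positive edge of $M'(\Sigma)$ is induced by one part and every negative edge crosses the partition.
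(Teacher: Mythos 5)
Your proof is correct, and it takes a genuinely different route from the paper's. The paper proves Theorem~\ref{ag1} via Theorem~\ref{Harary}: it starts from a Harary bipartition $V=V_1\cup V_2$ of $\Sigma$, proves a telescoping lemma extending the relation $\sigma_M'(v_i'w)\sigma_M'(v_j'w)=\sigma(v_iv_j)$ from edges to paths, uses the connectedness of $\Sigma$ to conclude that the spokes $v_i'w$ carry a constant sign on each of $V_1'$ and $V_2'$, and then places $w$ in the appropriate part to exhibit a Harary bipartition of $M'(\Sigma)$. You instead read the switching function directly off the hypothesis, setting $\zeta(v)=\sigma_M'(v'w)$, and verify in three one-line computations that the lift $\eta$ (with $\eta(v)=\eta(v')=\zeta(v)$, $\eta(w)=1$) switches $M'(\Sigma)$ to all-positive. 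Your argument is shorter, avoids both Harary's theorem and the path lemma, and does not use connectedness of $\Sigma$ at all, whereas the paper's proof explicitly invokes connectedness to guarantee the existence of the relevant paths; your version also makes transparent that the balance hypothesis on $\Sigma$ is implied by the mere existence of a $\sigma_M'$ satisfying the stated relation. The paper's approach, in exchange, produces an explicit Harary bipartition of $M'(\Sigma)$. It is worth noting that your argument is essentially the one the paper itself deploys later, in Section~\ref{bc}, for the specific construction $M_B(\Sigma)$ (there $\zeta$ is given and $\sigma_B(v_i'w)=\zeta(v_i)$ is defined from it); your insight is that the same switching computation proves the general theorem by running that correspondence in reverse.
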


\begin{proof}
	Since $\Sigma$ is balanced, by Harary bipartition theorem, there exist a bipartition $V=V_1\cup V_2$ of $V$ such that every negative edge in $\Sigma$ has its one end vertex in $V_1$ and the other in $V_2$. We construct a Harary bipartition for $M'(\Sigma)$ as follows.

	For $i=1,2$, let $V_i'=\{v_i':v_i\in V_i\}$ be the subsets of $V'$ corresponding to the subsets $V_1$ and $V_2$ of $V$. Since $V = V_1 \cup V_2$, we have $V' = V_1' \cup V_2'$. Every edge with both its end vertices in $V_1$ is positive and no vertices in $V_1'$ are adjacent. Also, for edges $v_iv_j'$, where $v_i\in V_1$ and $v_j'\in V_1'$, $\sigma_M'(v_iv_j')=\sigma_M(v_iv_j')=\sigma(v_iv_j)=+1$. Thus, every edge with both its end vertices in $V_1\cup V_1'$ is positive. Similarly, every edge with both its end vertices in $V_2\cup V_2'$ is positive.

	Consider any edge $e$ having one end vertex in $V_1\cup V_1'$ and the other in $V_2\cup V_2'$. There are three possibilities.
	\begin{enumerate}
		\item If $e=v_iv_j$, where $v_i\in V_1$ and $v_j\in V_2$, then $\sigma_M'(e)=\sigma_M(e)=\sigma_M(v_iv_j)=\sigma(v_iv_j)=-1$.
		\item If $e=v_iv_j'$, where $v_i\in V_1$ and $v_j'\in V_2'$, then $\sigma_M'(e)=\sigma_M(e)=\sigma_M(v_iv_j')=\sigma(v_iv_j)=-1$.
		\item If $e=v_i'v_j$, where $v_i'\in V_1'$ and $v_j\in V_2$, then $\sigma_M'(e)=\sigma_M(e)=\sigma_M(v_i'v_j)=\sigma(v_iv_j)=-1$.
	\end{enumerate}
	Hence, every edge joining $V_1\cup V_1'$ and $V_2\cup V_2'$ is negative.

	We claim that : If $\sigma_M'(v_k'w)$ is positive for some $v_k\in V_1$, then $\sigma_M'(v_i'w)$ is positive for all $v_i\in V_1$ and $\sigma_M'(v_j'w)$ is negative for all $v_j\in V_2$.

	To prove the claim, note that if  $\sigma_M'(v_i'w)\sigma_M'(v_j'w)=\sigma(v_iv_j)$ for every edge $v_iv_j$ in $\Sigma$, the same holds for every $v_iv_j$ path in $\Sigma$. For, consider a $v_iv_j$ path, say $v_iv_{i+1}v_{i+2}\cdots v_{j-1}v_{j}$, in $\Sigma$. Then,
	\begin{align*} \sigma(v_iv_j)&=\sigma(v_iv_{i+1}v_{i+2}\cdots v_{j-1}v_{j})\\
		&=\sigma(v_{i}v_{i+1})\sigma(v_{i+1}v_{i+2})\cdots\sigma(v_{j-1}v_{j})\\
		&=(\sigma_M'(v_{i}'w)\sigma_M'(v_{i+1}'w))(\sigma_M'(v_{i+1}'w)\sigma_M'(v_{i+2}'w))\cdots(\sigma_M'(v_{j-1}'w)\sigma_M'(v_{j}'w))\\
		&=\sigma_M'(v_{i}'w)(\sigma_M'(v_{i+1}'w)\sigma_M'(v_{i+2}'w)\cdots\sigma_M'(v_{j-1}'w))^2\sigma_M'(v_{j}'w))\\
		&=\sigma_M'(v_{i}'w)\sigma_M'(v_{j}'w).
	\end{align*}

	Now, consider  $v_k\in V_1$ and let $v_i\in V_1$ and $v_j\in V_2$ be arbitrary. Then every $v_iv_k$ path is positive and every $v_jv_k$ path is negative. The connectedness of $\Sigma$ guarantees the existence of such paths.\\
	Now, $\sigma_M'(v_i'w)\sigma_M'(v_k'w)=\sigma(v_iv_k)=+1$. Thus. $\sigma_M'(v_i'w)$ and $\sigma_M'(v_k'w)$ must have the same sign. \\
	Similarly, since $\sigma_M'(v_j'w)\sigma_M'(v_k'w)=\sigma(v_jv_k)=-1$, $\sigma_M'(v_j'w)$ and $\sigma_M'(v_k'w)$ are of the opposite sign.

	Thus, if $\sigma_M'(v_k'w)$ is positive for some $v_k\in V_1$, then $\sigma_M'(v_i'w)$ is positive for all $v_i\in V_1$ and $\sigma_M'(v_j'w)$ is negative for all $v_j\in V_2$. Hence, the claim is proved.

	Now consider the edges $v_i'w$, where, $v_i'\in V_1'\cup V_2'$. Because of the claim,  if $\sigma_M'(v_k'w)$ is positive for some $v_k\in V_1$, then $\sigma_M'(v_i'w)$ is positive for all $v_i\in V_1$ and $\sigma_M'(v_j'w)$ is negative for all $v_j\in V_2$. \\
	In this case, take $(V_M)_1=V_1\cup V_1'\cup\{w\}$ and $(V_M)_2=V_2\cup V_2'$.\\
	Similarly, if
	$\sigma_M'(v_k'w)$ is negative for some $v_k\in V_1$, then $\sigma_M'(v_i'w)$ is negative for all $v_i\in V_1$ and $\sigma_M'(v_j'w)$ is positive for all $v_j\in V_2$. \\
	In this case, take $(V_M)_1=V_1\cup V_1'$ and $(V_M)_2=V_2\cup V_2'\cup\{w\}$.

	Thus, in either cases,
	$V_M=(V_M)_1\cup (V_M)_2$  forms a  Harary bipartition for $M'(\Sigma)$, and hence
	$M'(\Sigma)$ is balanced.
\end{proof}
\begin{rmk}
One may note that $\sigma_M'$ is a different signature on $M(G)$ that coincides with $\sigma_M$ on $M(G)\backslash\{w\}$. The signature function $\sigma_M'$ for the remaining edges $v_i'w$ of $M(G)$ has to be defined using the relation stated in Theorem~\ref{ag1}. One such construction is discussed in Section~\ref{bc}.

It is also worth noting that if $\sigma_M'=\sigma_M$ on $M(G)$, then Theorem~\ref{ag1} reduces to Proposition~\ref{ag4}.
\end{rmk}
\subsection{A balance-preserving construction}\label{bc}
Given any balanced signed graph $\Sigma=(G,\sigma)$, there exist a switching function $\zeta:V(\Sigma)\rightarrow \{-1,+1\}$ that switches $\Sigma$ to all-positive. Define $M_B(\Sigma)$ as the signed graph with underlying graph $M(G)$ and having the signature function $\sigma_B$ defined as
\begin{align*}
	\sigma_B(v_iv_j)&=\sigma(v_iv_j),\\
	\sigma_B(v_i'v_j)&=\sigma_B(v_iv_j')=\sigma(v_iv_j),\\
	\sigma_B(v_i'w)&=\zeta(v_i).
\end{align*}
Define a switching function $\zeta_B:V(M_B(\Sigma))\rightarrow\{-1,+1\}$ by
\begin{align*}
	\zeta_B(v_i)&=\zeta(v_i),\\
	\zeta_B(v_i')&=\zeta(v_i),\\
	\zeta_B(w)&=1.
\end{align*}
Since $\zeta$ switches $\Sigma$ to all-positive, for edges $v_iv_j$,
	\begin{align*} \sigma_B^{\zeta_B}(v_iv_j)&=\zeta_B(v_i)\sigma_B(v_iv_j)\zeta_B(v_j)\\
	&=\zeta(v_i)\sigma(v_iv_j)\zeta(v_j)\\
	&=\sigma^\zeta(v_iv_j)\\
	&=+1.
\end{align*}
Similarly, for edges $v_i'v_j$,
	\begin{align*} \sigma_B^{\zeta_B}(v_i'v_j)&=\zeta_B(v_i')\sigma_B(v_i'v_j)\zeta_B(v_j)\\
	&=\zeta(v_i)\sigma(v_iv_j)\zeta(v_j)\\
	&=\sigma^\zeta(v_iv_j)\\
	&=+1.
\end{align*}
Also, for edges $v_i'w$,
	\begin{align*} \sigma_B^{\zeta_B}(v_i'w)&=\zeta_B(v_i')\sigma_B(v_i'w)\zeta_B(w)\\
	&=\zeta(v_i)\zeta(v_i)(+1)\\
	&=(\zeta(v_i))^2\\
	&=+1.
\end{align*}
Hence, $\zeta_B$ switches $M_B(\Sigma)$ to all-positive. Thus, $M_B(\Sigma)=(M(G),\sigma_B)$ is balanced, which we call as the balanced Mycielskian of $\Sigma$.
\begin{defn}[Balanced Mycielskian]
	Let $\Sigma=(G,\sigma)$ be a balanced signed graph, where the underlying graph $G=(V,E)$, is a finite simple  connected graph. The signed graph $M_B(\Sigma)=(M(G),\sigma_B)$ is called the balanced Mycielskian of $\Sigma$.
\end{defn}
One can observe that under this construction, if two balanced signed graphs $\Sigma_1$ and  $\Sigma_2$ are switching equivalent, then their corresponding balanced Mycielskians $M_B(\Sigma_1)$ and $M_B(\Sigma_2)$ are also switching equivalent.
\begin{rmk}
	Note that since $\sigma^\zeta(v_iv_j)=+1$, for every edge $v_iv_j$ in $\Sigma$, we have $\zeta(v_i)\zeta(v_j)=\sigma(v_iv_j)$. Thus,
	\begin{align*}
		\sigma_B(v_i'w)\sigma_B(v_i'w)&=\zeta(v_i)\zeta(v_j)\\
		&=\sigma(v_iv_j)
	\end{align*}
	Hence, the signature function defined for the balanced Mycielskian satisfies the condition given in Theorem~\ref{ag1}.
\end{rmk}
\begin{ex}
	Let $\Sigma$ be the balanced  4-cycle shown in Figure~\ref{fig:sub3}. The switching function $\zeta:V(\Sigma)\rightarrow\{-1,1\}$ defined by $\zeta(v_1)=\zeta(v_3)=\zeta(v_4)=-1$ and $\zeta(v_2)=1$ switches $\Sigma$ to all-positive. The corresponding balanced Mycielskian is constructed in Figure~\ref{fig:sub4}.
	\begin{figure}[h!]
		\centering
		\begin{subfigure}{.5\textwidth}
			\centering
			\begin{tikzpicture}[x=0.75cm, y=0.75cm]
				\vertex[fill] (v) at (0,0) [label=left:$v_1$] {};
				\vertex[fill] (w) at (4,0) [label=right:$v_2$] {};
				\vertex[fill] (x) at (0,4) [label=left:$v_4$] {};
				\vertex[fill] (y) at (4,4) [label=right:$v_3$] {};
				\path[very thick, dotted]
				(v) edge (w)
				(w) edge (y)

				;
				\path
				(x) edge (v)
				(y) edge (x)
				;
			\end{tikzpicture}
			\caption{$\Sigma$}
			\label{fig:sub3}
		\end{subfigure}%
		\begin{subfigure}{.5\textwidth}
			\centering
			\begin{tikzpicture}[x=0.75cm, y=0.75cm]
				\vertex[fill] (v1) at (0,0) [label=left:$v_1$] {};
				\vertex[fill] (v2) at (2,0) [label=above:$v_2$] {};
				\vertex[fill] (v3) at (4,0) [label=above:$v_3$] {};
				\vertex[fill] (v4) at (6,0) [label=right:$v_4$] {};
				\vertex[fill] (v1') at (0,-2) [label=left:$v_1'$] {};
				\vertex[fill] (v2') at (2,-2) [label=right:$v_2'$] {};
				\vertex[fill] (v3') at (4,-2) [label=right:$v_3'$] {};
				\vertex[fill] (v4') at (6,-2) [label=right:$v_4'$] {};
				\vertex[fill] (w) at (3,-4) [label=below:$w$] {};

				\path[very thick, dotted]
				(v1) edge (v2)
				(v1) edge (v2')
				(v1') edge (v2)
				(v2) edge (v3)
				(v2) edge (v3')
				(v2') edge (v3)
				(v1') edge (w)
				(v3') edge (w)
				(v4') edge (w)
				;
				\path
				(v3) edge (v4)
				(v3') edge (v4)
				(v3) edge (v4')
				(v1') edge (v4)
				(v1) edge (v4')
				(v2') edge (w)
				;
				\draw[] (0,0) .. controls (3,2.3) .. (6, 0);
			\end{tikzpicture}
			\caption{$M_B(\Sigma)$}
			\label{fig:sub4}
		\end{subfigure}
		\caption{A balanced signed graph $\Sigma$ and its balanced Mycielskian $M_B(\Sigma)$.}
		\label{fig:test2}
	\end{figure}
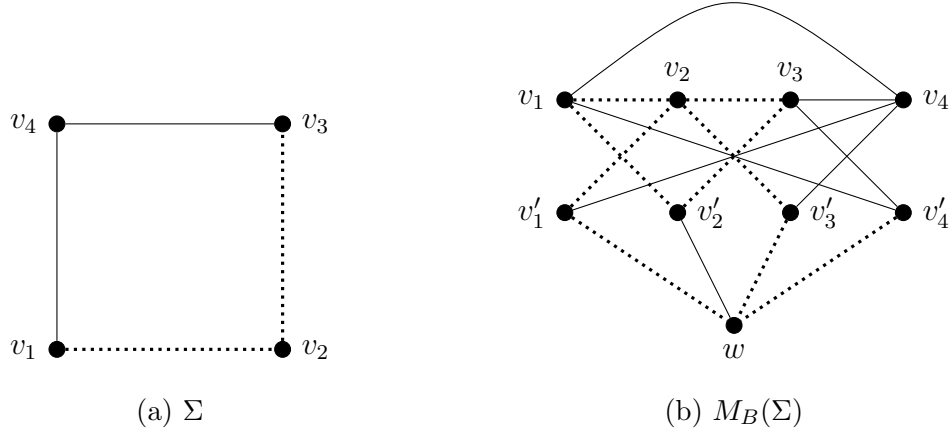
\end{ex}

\section{The chromatic number of Mycielskian of signed graphs}\label{sec3}
In 1981, Zaslavsky \cite{tz2} introduced the concept of coloring a signed graph. For a signed graph $\Sigma$, he defined the signed coloring of $\Sigma$ in $\mu$ colors, or in $2\mu+1$ signed colors as a mapping  $c:V(\Sigma)\rightarrow\{-\mu,-\mu+1,\dots,0,\dots,\mu-1,\mu\}$. Whenever a coloring never assumes the value 0, it is referred to as a zero-free coloring. A coloring $c$ is said to be proper if $c(u)\neq\sigma(e)c(v)$ for every edge $e=uv$ of $\Sigma$ (see \cite[Section 1]{tz2}).

Máčajová \textit{et al.} in \cite{em} defined the chromatic number of a signed graph as follows.
\begin{defn}\cite{em}\label{def2}
	An $n$ - coloring of a signed graph $\Sigma$ is a proper coloring that uses colors from the set $M_n$, which is defined for each $n\geq1$ as
	$$M_n=\begin{cases}
		\{\pm 1,\pm 2,\dots\,\pm k\} & \text{if} \,\,  n=2k\\
		\{0, \pm 1,\pm 2,\dots\,\pm k\} & \text{if}\,\,   n=2k+1
	\end{cases}$$
	The smallest $n$ such that $\Sigma$ admits an $n$ - coloring is called the \textit{chromatic number} of $\Sigma$ and is denoted by $\chi(\Sigma)$.

	The chromatic number of a balanced signed graph coincides with the chromatic number of its underlying unsigned graph.
\end{defn}
\begin{prop}
 Let $M(\Sigma)\backslash\{w\}$ be the signed graph obtained by removing the root vertex $w$ (and the corresponding edges) from $M(\Sigma)$. Then $\chi(M(\Sigma)\backslash\{w\})=\chi(\Sigma)$.
\end{prop}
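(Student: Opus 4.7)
The plan is to prove both inequalities $\chi(\Sigma)\le\chi(M(\Sigma)\setminus\{w\})$ and $\chi(M(\Sigma)\setminus\{w\})\le\chi(\Sigma)$ separately, with the bulk of the work being the (routine) verification that a natural ``copy'' coloring is proper.

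For the lower bound, I would observe that $\Sigma$ sits inside $M(\Sigma)\setminus\{w\}$ as the subsignedgraph induced by $V$: the edge $v_iv_j$ is present in both with the same sign $\sigma(v_iv_j)$. Consequently, if $c'$ is any proper $n$-coloring of $M(\Sigma)\setminus\{w\}$ using colors from $M_n$, then the restriction $c = c'|_V$ is a proper $n$-coloring of $\Sigma$, giving $\chi(\Sigma)\le\chi(M(\Sigma)\setminus\{w\})$ immediately.

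For the upper bound, let $c:V(\Sigma)\to M_n$ be a proper coloring realising $\chi(\Sigma)$. Extend it to $\tilde c:V(M(\Sigma)\setminus\{w\})\to M_n$ by $\tilde c(v_i)=c(v_i)$ and $\tilde c(v_i')=c(v_i)$ for every $v_i\in V$. The only edges of $M(\Sigma)\setminus\{w\}$ are the original edges $v_iv_j$ of $\Sigma$ and the ``twinned'' edges $v_i'v_j$ (for each $v_iv_j\in E(\Sigma)$), with $\sigma_M(v_i'v_j)=\sigma(v_iv_j)$. For an original edge, properness of $\tilde c$ is immediate from properness of $c$. For a twinned edge $v_i'v_j$, the condition $\tilde c(v_i')\ne\sigma_M(v_i'v_j)\,\tilde c(v_j)$ reads $c(v_i)\ne\sigma(v_iv_j)\,c(v_j)$, which is precisely properness of $c$ on the edge $v_iv_j$ of $\Sigma$. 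Note also that $V'$ is independent in $M(\Sigma)\setminus\{w\}$, so no further checks are required. Hence $\tilde c$ is a proper $n$-coloring, giving $\chi(M(\Sigma)\setminus\{w\})\le\chi(\Sigma)$.

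There is no real obstacle: the argument is essentially that the twin vertices $v_i'$ only see neighbours of $v_i$ in $\Sigma$ (with matching signs), so assigning $v_i'$ the same colour as $v_i$ automatically respects propriety. Combining the two inequalities yields the equality $\chi(M(\Sigma)\setminus\{w\})=\chi(\Sigma)$.
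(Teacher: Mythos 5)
Your proposal is correct and follows essentially the same route as the paper: extend a proper coloring of $\Sigma$ to $M(\Sigma)\setminus\{w\}$ by giving each twin $v_i'$ the color of $v_i$, and check properness on the edges $v_i'v_j$ using $\sigma_M(v_i'v_j)=\sigma(v_iv_j)$. You are in fact slightly more complete than the paper, which only verifies the upper bound and leaves the (trivial) lower bound, via $\Sigma$ being an induced subgraph of $M(\Sigma)\setminus\{w\}$, implicit.
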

\begin{proof}
	Let $\chi(\Sigma)=n$ and let $c:V(\Sigma)\rightarrow M_n$ be an $n$ - coloring for $\Sigma$. Define $c':V((M(\Sigma)\backslash\{w\})\rightarrow M_n$ by $c'(v_i')=c'(v_i)=c(v_i)$ for all $i$. Since $c(v_i)\neq\sigma(v_iv_j)c(v_j)$, it follows that $c'(v_i)\neq\sigma_M(v_iv_j)c'(v_j)$ and $c'(v_i')\neq\sigma_M(v_i'v_j)c'(v_j)$. Hence, $c'$ is an $n$ - coloring for $M(\Sigma)\backslash\{w\}$.
\end{proof}
For any given signed graph $\Sigma$, there exist a signed graph $-\Sigma$ obtained by reversing the signs of all edges of $\Sigma$. We say $\Sigma$ is antibalanced when $-\Sigma$ is balanced. Note that $\Sigma$ is antibalanced if and only if it can be switched to all-negative.\\

	We restate the Lemma 2.4  from \cite{tz4} as follows.
\begin{lem}[\cite{tz4}]\label{tz4}
A signed graph $\Sigma$ is antibalanced if and only if $\chi(\Sigma)\leq 2$.
\end{lem}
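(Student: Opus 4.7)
The plan is to recognize that, once the color set is $\{-1,+1\}$, a proper $2$-coloring of $\Sigma$ is literally the same data as a switching function that turns $\Sigma$ into an all-negative signed graph. The proof will then be a one-line reformulation in each direction, plus a quick comment on the degenerate $1$-coloring case.

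First I would unpack the definition: by Definition~\ref{def2}, $\chi(\Sigma)\le 2$ means $\Sigma$ admits a proper coloring $c:V(\Sigma)\to M_2=\{-1,+1\}$, i.e., $c(u)\neq \sigma(uv)c(v)$ for every edge $uv$ (the possibility $\chi(\Sigma)=1$ forces $M_1=\{0\}$ as the only color, which needs $0\neq 0$ on every edge, so it can occur only when $\Sigma$ has no edges, and in that case $\Sigma$ is vacuously antibalanced). For $c(u),c(v),\sigma(uv)\in\{-1,+1\}$, the inequality $c(u)\neq\sigma(uv)c(v)$ is equivalent to $c(u)\sigma(uv)c(v)=-1$. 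Reading $c$ as a switching function $\zeta=c$, this is exactly $\sigma^{c}(uv)=-1$ for every edge $uv$, that is, $\Sigma^{c}$ is all-negative.

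From here both directions are immediate. If $\chi(\Sigma)\le 2$, pick such a $c$; the equivalence above shows $\Sigma^{c}$ is all-negative, so $\Sigma$ can be switched to an all-negative signed graph, which is precisely the definition of antibalanced given immediately before the lemma. Conversely, if $\Sigma$ is antibalanced, there exists a switching function $\zeta:V(\Sigma)\to\{-1,+1\}$ with $\Sigma^{\zeta}$ all-negative; setting $c:=\zeta$ gives $c(u)\sigma(uv)c(v)=-1$ on every edge, equivalently $c(u)\neq\sigma(uv)c(v)$, so $c$ is a proper coloring with values in $\{-1,+1\}\subseteq M_2$, whence $\chi(\Sigma)\le 2$.

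There is no real obstacle: the only thing to be careful with is the algebraic identity $c(u)\neq\sigma(uv)c(v)\iff c(u)\sigma(uv)c(v)=-1$ for $\pm 1$-valued entries, and the remark that a $1$-coloring (using $M_1=\{0\}$) is available only on an edgeless signed graph, which is trivially antibalanced. Everything else is just unpacking the definitions of $\chi$, switching, and antibalance.
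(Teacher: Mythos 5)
The paper does not prove this lemma at all---it is imported verbatim from \cite{tz4}---so there is no in-paper argument to compare against; your proof is correct and complete as a self-contained derivation from the definitions the paper does supply. The central identification of a proper coloring $c:V(\Sigma)\to\{-1,+1\}$ with a switching function taking $\Sigma$ to an all-negative signed graph (via $c(u)\neq\sigma(uv)c(v)\iff c(u)\sigma(uv)c(v)=-1$ for $\pm1$ values) is exactly the standard argument, and you correctly dispose of the degenerate $\chi(\Sigma)=1$ case, which forces an edgeless and hence vacuously antibalanced graph.
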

\begin{thm}
	Let $\Sigma$ be a signed graph and $M(\Sigma)$ be its Mycielskian. Then, $\chi(M(\Sigma)) \leq 2$ if and only if $\Sigma$ is all-negative.
\end{thm}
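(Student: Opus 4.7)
The plan is to translate the chromatic condition into a balance condition via Lemma~\ref{tz4}, and then relate the balance of $M(\Sigma)$ (more precisely, of $-M(\Sigma)$) to the already established Proposition~\ref{ag4}. By Lemma~\ref{tz4}, the inequality $\chi(M(\Sigma))\leq 2$ is equivalent to $M(\Sigma)$ being antibalanced, i.e.\ to $-M(\Sigma)$ being balanced. So the whole task reduces to characterizing when $-M(\Sigma)$ is balanced.

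The key observation I would exploit is that $-M(\Sigma)$ and $M(-\Sigma)$ differ only on the edges incident to the root $w$. Indeed, in $-M(\Sigma)$ the edges of the form $v_iv_j$ and $v_i'v_j$ carry sign $-\sigma(v_iv_j)$, while the spokes $v_i'w$ carry sign $-1$; in $M(-\Sigma)$ the first two families carry the same signs $-\sigma(v_iv_j)$, but the spokes $v_i'w$ carry $+1$. Hence switching $-M(\Sigma)$ at the single vertex $w$ (taking $\zeta(w)=-1$ and $\zeta(x)=+1$ for all other vertices $x$) flips precisely the spoke edges and leaves every other edge fixed, producing $M(-\Sigma)$. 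Thus $-M(\Sigma)$ and $M(-\Sigma)$ are switching equivalent, and in particular one is balanced if and only if the other is.

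Finally, I would apply Proposition~\ref{ag4} to the signed graph $-\Sigma$: its Mycielskian $M(-\Sigma)$ is balanced if and only if $-\Sigma$ is all-positive, which is the same as saying that $\Sigma$ is all-negative. Stringing the equivalences together gives
\[
\chi(M(\Sigma))\leq 2 \iff M(\Sigma)\text{ antibalanced} \iff -M(\Sigma)\text{ balanced} \iff M(-\Sigma)\text{ balanced} \iff \Sigma\text{ all-negative},
\]
which is the claim.

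The only delicate step is the switching equivalence between $-M(\Sigma)$ and $M(-\Sigma)$; it is the whole content of the argument, because without it one might be tempted to invoke Proposition~\ref{ag4} directly on $-\Sigma$ and forget that negating the spokes $v_i'w$ (which originally had sign $+1$) changes the signed graph in a way that must be undone by switching. Once that small bookkeeping at the root vertex is handled, the rest is an immediate chain of equivalences, and no further obstacles are expected.
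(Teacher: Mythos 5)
Your proof is correct, but it is packaged differently from the one in the paper. The paper argues both directions directly: for the forward implication it exhibits the explicit switching function $\zeta(w)=-1$, $\zeta\equiv +1$ elsewhere, that turns $M(\Sigma)$ all-negative when $\Sigma$ is all-negative; for the converse it picks a positive edge $v_iv_j$ of $\Sigma$ and verifies that $v_iv_jv_i'wv_j'v_i$ is a negative $5$-cycle in $-M(\Sigma)$. You instead isolate the structural identity that $-M(\Sigma)$ becomes $M(-\Sigma)$ after switching at the root $w$ alone (the same switching function the paper uses, but applied to $-M(\Sigma)$ rather than $M(\Sigma)$), and then let Proposition~\ref{ag4}, applied to $-\Sigma$, do all the remaining work. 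The benefit of your route is economy and reusability: the negative $5$-cycle argument is not repeated, both directions of the equivalence come out of a single chain, and the switching equivalence of $-M(\Sigma)$ and $M(-\Sigma)$ is a small general fact worth recording on its own. The paper's version is more self-contained and concrete, at the cost of essentially re-proving the unbalanced half of Proposition~\ref{ag4} inside the converse. Your bookkeeping at the root is the right place to be careful, and you handled it correctly: the spokes are exactly the edges incident to $w$, so the single-vertex switching flips them and nothing else.
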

\begin{proof}
	If $\Sigma$ is an all-negative signed graph with vertex set $\{v_1,v_2,\dots v_p\}$, then the only positive edges of $M(\Sigma)$ are $v_i'w$, $1\leq i \leq p$. Now, the switching function $\zeta_M':V(M(\Sigma))\rightarrow \{-1,1\}$ defined by $\zeta_M'(v_i)=\zeta_M'(v_i')=1$ for all $1\leq i \leq p$ and $\zeta_M'(w)=-1$ switches $M(\Sigma)$ to all-negative. Therefore, $M(\Sigma)$ is antibalanced and hence $\chi(M(\Sigma)) \leq 2$, by Lemma~\ref{tz4}. Conversely, if $\Sigma$ is not all-negative, it contains at least one positive edge, say $v_iv_j$. Then $v_iv_jv_i'w  v_j'v_i$ forms a negative $5$ - cycle in $-M(\Sigma)$, making it unbalanced. Thus, $M(\Sigma)$ is not antibalanced and therefore, by Lemma~\ref{tz4},  $\chi(M(\Sigma))>2$.
\end{proof}

We have the following theorem in \cite{br}.
\begin{thm}[\cite{br}]\label{br1}
	Let $\chi(G)$ and $\chi(M(G))$ be the chromatic numbers of a graph $G$ and its Mycielskian $M(G)$ respectively. Then
	$\chi(M(G))=\chi(G)+1$.
\end{thm}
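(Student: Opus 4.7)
The plan is to establish the two inequalities $\chi(M(G))\le \chi(G)+1$ and $\chi(M(G))\ge \chi(G)+1$ separately.

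For the upper bound I would start from a proper $\chi(G)$-colouring $c$ of $G$ and extend it to $M(G)$ by giving each twin $v'$ the same colour as $v$ and reserving one fresh colour for the root $w$. A short case check over the three edge types in Definition~\ref{def0}---original edges $uv$, cross-edges $u'v$, and spokes $v'w$---confirms that the extension is proper, which yields $\chi(M(G))\le \chi(G)+1$.

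The substantive half is the reverse inequality. The strategy is: given any proper $k$-colouring $c$ of $M(G)$, to manufacture a proper colouring of $G$ that uses only $k-1$ colours, thereby forcing $\chi(G)\le k-1$. Write $C_w:=c(w)$; since $w$ is adjacent to every $v'\in V'$, no twin carries the colour $C_w$. I would then define a colouring $c''$ on $V(G)$ by
\[
c''(v)=\begin{cases} c(v), & \text{if } c(v)\ne C_w,\\ c(v'), & \text{if } c(v)=C_w,\end{cases}
\]
so that by construction $c''$ never uses $C_w$, and hence uses at most $k-1$ colours.

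The main obstacle---and the only nontrivial verification---is to check that $c''$ is proper on every edge $uv\in E$. Because $c$ is proper on $M(G)$ and $uv\in E(M(G))$, the endpoints $u,v$ cannot both carry colour $C_w$, so at most one of them is re-coloured. If neither is re-coloured, properness is inherited directly from the edge $uv$ of $M(G)$; if exactly one endpoint, say $u$, is re-coloured, then $c''(u)=c(u')$ and $c''(v)=c(v)$, and properness follows from the cross-edge $u'v\in E(M(G))$, which guarantees $c(u')\ne c(v)$. This forces $\chi(G)\le k-1$, and combining with the upper bound gives $\chi(M(G))=\chi(G)+1$.
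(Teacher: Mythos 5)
Your argument is correct and complete: the extension $c(v')=c(v)$ with a fresh colour on $w$ gives the upper bound, and the recolouring of the $C_w$-coloured vertices of $V$ by their twins' colours, justified via the cross-edges $u'v$, is exactly the classical Mycielski argument for the lower bound. The paper itself states this theorem only as a cited result from \cite{br} and gives no proof, so there is nothing to compare against; your proof coincides with the standard textbook proof.
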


\begin{thm}\label{ag3}
	Let $M(\Sigma)$ be the  Mycielskian  of a signed graph $(\Sigma)$. Then,  $\chi(\Sigma)\leq\chi(M(\Sigma))\leq\chi(\Sigma)+1$.
	Furthermore, $\chi(M(\Sigma))=\chi(\Sigma)$ if $\Sigma$ is all-negative and $\chi(M(\Sigma))=\chi(\Sigma)+1$ if $\Sigma$ is all-positive.
\end{thm}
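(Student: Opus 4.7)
The plan is to establish the two-sided bound first and then handle the two equality cases. For the lower bound $\chi(\Sigma) \le \chi(M(\Sigma))$, I would observe that $\Sigma$ sits inside $M(\Sigma)$ as a signed induced subgraph, since by the definition of $\sigma_M$ the restriction of $\sigma_M$ to $E(\Sigma)$ equals $\sigma$. Therefore restricting any proper coloring of $M(\Sigma)$ to $V$ produces a proper coloring of $\Sigma$ with the same color set, immediately giving the inequality.

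For the upper bound $\chi(M(\Sigma)) \le \chi(\Sigma)+1$, let $n=\chi(\Sigma)$ and pick a proper coloring $c:V(\Sigma)\to M_n$. I would define $c'(v_i)=c'(v_i')=c(v_i)$ and then choose $c'(w)$ in $M_{n+1}$ separately. Because $\sigma_M(v_iv_j)=\sigma_M(v_i'v_j)=\sigma(v_iv_j)$, properness at every edge of $M(\Sigma)$ not incident with $w$ is inherited from properness of $c$ on $\Sigma$. The only remaining constraint is that $c'(w)$ must differ from every $c(v_i)$, since each $v_i'w$ is positive. When $n=2k$ is even, $0\notin M_n$ while $0\in M_{n+1}$, so setting $c'(w)=0$ works. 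When $n=2k+1$ is odd the subtlety is that $M_n\not\subseteq M_{n+1}$: I would first recolor by replacing every vertex with $c(v)=0$ by the new color $k+1$ to obtain $\hat c:V\to M_{n+1}$. This $\hat c$ is still proper for $\Sigma$, because two originally $0$-colored vertices cannot have been adjacent (as $\sigma(e)\cdot 0=0$) and $|k+1|$ strictly exceeds any other absolute color value, so no equation $k+1=\sigma(e)\hat c(u)$ can hold. Then $c'(w)=-(k+1)\in M_{n+1}$ differs from every $\hat c(v_i)$, completing the coloring.

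For the equality assertions, suppose first that $\Sigma$ is all-negative. The theorem immediately preceding this statement gives $\chi(M(\Sigma))\le 2$; since $M(\Sigma)$ contains at least the positive edge $v_1'w$, it is not $1$-colorable, so $\chi(M(\Sigma))\ge 2$. An all-negative $\Sigma$ (with any edge, which follows from our connectedness assumption whenever $|V|\ge 2$) also has $\chi(\Sigma)=2$: it is antibalanced and hence $\chi(\Sigma)\le 2$ by Lemma~\ref{tz4}, but cannot be colored with $M_1=\{0\}$ because $0=-0$. So $\chi(M(\Sigma))=\chi(\Sigma)=2$. Suppose now that $\Sigma$ is all-positive. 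By the definition of $\sigma_M$, $M(\Sigma)$ is also all-positive, hence balanced, so its chromatic number equals that of the underlying graph $M(G)$; similarly $\chi(\Sigma)=\chi(G)$. Applying Theorem~\ref{br1} yields $\chi(M(\Sigma))=\chi(M(G))=\chi(G)+1=\chi(\Sigma)+1$.

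The main obstacle I anticipate is the parity-dependent definition of $M_n$: because $M_n\not\subseteq M_{n+1}$ when $n$ is odd, the extension in the upper bound cannot simply reuse the original coloring, and the small $0\mapsto k+1$ recoloring trick is the only non-routine ingredient. All other steps reduce to direct applications of the earlier structural results on balance, antibalance, and the chromatic number of balanced signed graphs.
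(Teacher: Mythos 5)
Your proof is correct and follows essentially the same route as the paper's: restrict to $V$ for the lower bound, copy the coloring to the twin vertices and give $w$ a fresh color with a parity case split for the upper bound, and use antibalance (resp.\ balance together with Theorem~\ref{br1}) for the all-negative (resp.\ all-positive) case. Your odd-$n$ step is in fact slightly more careful than the paper's, which recolors only a single $0$-colored vertex $v_t$ even though \emph{every} vertex colored $0$ must be moved to $k+1$ because $0\notin M_{n+1}$; your version, which also verifies that this global recoloring stays proper, repairs that imprecision.
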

\begin{proof}
	Let $\chi(\Sigma)=n$ and let $c:V\rightarrow M_n$ be an $n$ - coloring for $\Sigma$. We extend $c$ to an $(n+1)$ - coloring of $M(\Sigma)$. If $n=2k$, we extend $c$ to an $(n+1)$ - coloring of $M(\Sigma)$ by setting $c(v_i')=c(v_i)$ for all $i$  and $c(w)=0$. If $n=2k+1$, we extend $c$ to an $(n+1)$ - coloring of $M(\Sigma)$ as follows. Let $v_t$ be any vertex in $V$ with $c(v_t)=0$. Then for all $v_i\neq v_t$, set $c(v_i')=c(v_i)$ , $c(v_t')=c(v_t)=k+1$ and $c(w)=-(k+1)$. Hence, $\chi(M(\Sigma))\leq \chi(\Sigma)+1$.

	Now, if $\Sigma$ is all-negative, it can be colored using just one color, namely $-1$. Let $c:V(\Sigma)\rightarrow \{\pm1\}$ be the proper $2$ - coloring for $\Sigma$. This can be extended to a proper $2$ - coloring for $M(\Sigma)$ by setting $c(v_i')=c(v_i)=-1$ for all $i$  and $c(w)=+1$. If $\Sigma$ is all-positive, then $M(\Sigma)$ is all-positive. Thus, $\chi(M(\Sigma))=\chi(|M(\Sigma)|)=\chi(|\Sigma|)+1=\chi(\Sigma)+1$.
\end{proof}

\begin{rmk}
	Let $\Sigma$ be a signed graph with $\chi(\Sigma)=n$ and let $c:V(\Sigma)\rightarrow M_n$ be an $n$ - coloring of $\Sigma$. The \textit{deficiency} of the coloring c is the number of unused colors from $M_n$ (see \cite{arwm}).
	The existence of signed graphs satisfying $\chi(M(\Sigma))=\chi(\Sigma)$ is a consequence of the deficiency of the coloring of $\Sigma$. Specifically, if the coloring of $\Sigma$ has deficiency at least $1$, then an unused color can be assigned to $w$, making the chromatic number of  $M(\Sigma)$ and $\Sigma$ equal. As an example, consider $\Sigma$ as the balanced $3$ - cycle shown in Figure~\ref{fig:sub9}. Note that $\chi(\Sigma)=3$ and the color $-1$ in the color set $\{0,\pm 1\}$ is unused.
	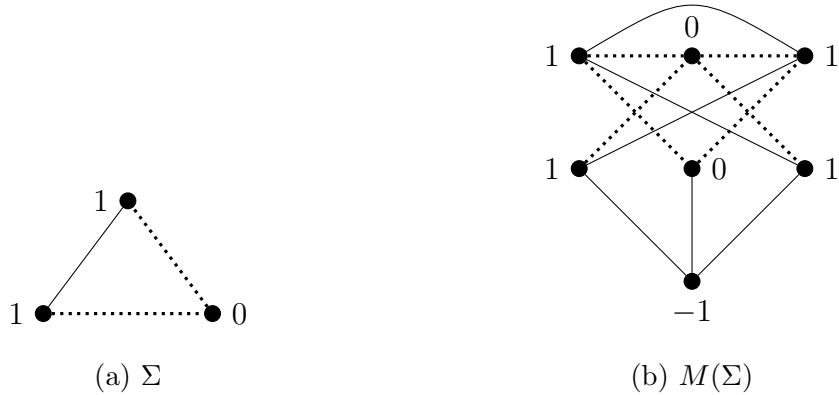
\begin{figure}[h!]
	\centering
	\begin{subfigure}{.5\textwidth}
		\centering
		\begin{tikzpicture}[x=0.75cm, y=0.75cm]
			\vertex[fill] (v1) at (0,0) [label=left:$1$] {};
			\vertex[fill] (v2) at (3,0) [label=right:$0$] {};
			\vertex[fill] (v3) at (1.5,2) [label=left:$1$] {};
			\path[very thick, dotted]
			(v1) edge (v2)
			(v2) edge (v3)

			;
			\path
			(v1) edge (v3)
			;
		\end{tikzpicture}
		\caption{$\Sigma$}
		\label{fig:sub9}
	\end{subfigure}%
	\begin{subfigure}{.5\textwidth}
		\centering
		\begin{tikzpicture}[x=0.75cm, y=0.75cm]
			\vertex[fill] (v1) at (0,0) [label=left:$1$] {};
			\vertex[fill] (v2) at (2,0) [label=above:$0$] {};
			\vertex[fill] (v3) at (4,0) [label=right:$1$] {};
			\vertex[fill] (v1') at (0,-2) [label=left:$1$] {};
			\vertex[fill] (v2') at (2,-2) [label=right:$0$] {};
			\vertex[fill] (v3') at (4,-2) [label=right:$1$] {};
			\vertex[fill] (w) at (2,-4) [label=below:$-1$] {};

			\path[very thick, dotted]
			(v1) edge (v2)
			(v1) edge (v2')
			(v1') edge (v2)
			(v2) edge (v3)
			(v2) edge (v3')
			(v2') edge (v3)

			;
			\path

			(v1) edge (v3')
			(v1') edge (v3)
			(v1') edge (w)
			(v2') edge (w)
			(v3') edge (w)
			;
			\draw[] (0,0) .. controls (2,1.2) .. (4, 0);
		\end{tikzpicture}
		\caption{$M(\Sigma)$}
		\label{fig:sub10}
	\end{subfigure}
	\caption{A signed graph $\Sigma$  satisfying $\chi(M(\Sigma))=\chi(\Sigma)$}
	\label{fig:test4}
\end{figure}
\end{rmk}
We now establish some results on the balanced Mycielskian of signed graphs.
\begin{prop}\label{p1}
Let $\Sigma=(G,\sigma)$ be a balanced signed graph and $M_B(\Sigma)=(M(G),\sigma_B)$ be its balanced Mycielskian. Then $\chi(M_B(\Sigma))=\chi(\Sigma)+1$.
\end{prop}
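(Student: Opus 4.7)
The plan is to reduce everything to the classical (unsigned) Mycielski identity $\chi(M(G))=\chi(G)+1$ from Theorem~\ref{br1}, exploiting the fact that $M_B(\Sigma)$ was constructed precisely to be balanced. The key observation, stated right after Definition~\ref{def2}, is that the chromatic number of a balanced signed graph equals the chromatic number of its underlying unsigned graph. Since both $\Sigma$ and $M_B(\Sigma)$ are balanced, we can pass freely between the signed and unsigned chromatic numbers on both sides.

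First I would record that $\Sigma$ is balanced by hypothesis, so $\chi(\Sigma)=\chi(G)$, where $G$ is the underlying graph of $\Sigma$. Next I would invoke the construction in Section~\ref{bc}: the switching function $\zeta_B$ exhibited there switches $M_B(\Sigma)$ to an all-positive signed graph, so $M_B(\Sigma)$ is balanced. Consequently $\chi(M_B(\Sigma))=\chi(M(G))$, since the underlying graph of $M_B(\Sigma)$ is $M(G)$ by construction.

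Finally I would apply Theorem~\ref{br1} to the unsigned side to get $\chi(M(G))=\chi(G)+1$, and chain the equalities:
\[
\chi(M_B(\Sigma))=\chi(M(G))=\chi(G)+1=\chi(\Sigma)+1.
\]
There is no real obstacle here; all the substantive content has already been established (balance of $M_B(\Sigma)$ in Section~\ref{bc}, the invariance of chromatic number under balance in Definition~\ref{def2}, and the classical Mycielski bound in Theorem~\ref{br1}), so the proof is essentially a one-line chain of equalities once those three ingredients are cited in the right order.
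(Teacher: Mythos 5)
Your proposal is correct and follows exactly the same route as the paper's proof: both use the balance of $\Sigma$ and of $M_B(\Sigma)$ to identify their signed chromatic numbers with those of the underlying graphs $G$ and $M(G)$, and then apply Theorem~\ref{br1}. No discrepancies.
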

\begin{proof}
	Since $\Sigma$ and $M_B(\Sigma)$ are both balanced, $\chi(M_B(\Sigma))=\chi(M(G))$ and $\chi(\Sigma)=\chi(G)$. The result then follows from Theorem~\ref{br1}.
\end{proof}
The following theorem was put forward by Mycielski in \cite{my}
\begin{thm}[\cite{my}]\label{my}
For any positive integer $n$, there exists a
triangle-free graph with chromatic number $n$.
\end{thm}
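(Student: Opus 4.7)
The plan is to proceed by induction on $n$, using the (unsigned) Mycielski construction of Definition~\ref{def0} as the bridge from $n$ to $n+1$. The base cases are immediate: $K_1$ and $K_2$ are triangle-free with chromatic numbers $1$ and $2$ respectively, which handles $n=1$ and $n=2$ at no cost.

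For the inductive step, assume $G$ is a triangle-free graph with $\chi(G)=k$, and form its Mycielskian $M(G)$. Two facts must then be verified: that $M(G)$ is again triangle-free, and that $\chi(M(G))=k+1$. The chromatic number claim is exactly Theorem~\ref{br1}, so no further work is needed there. Triangle-freeness of $M(G)$ is a direct instance of observation~(iii) of the introduction, applied to $G$ endowed with the trivial all-positive signature; alternatively, I would argue by a short case analysis on where the three vertices of a hypothetical triangle $T\subset M(G)$ can sit. Since $V'$ is independent in $M(G)$ and $w$ is adjacent only to $V'$, neither $V'$ alone nor any configuration containing $w$ can host $T$. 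The remaining configurations are ``three originals'' (forcing a triangle of $G$ outright) and ``two originals plus one twin'' $\{u,v,x'\}$; in the latter, the edges $ux'$ and $vx'$ pull back to $xu,xv\in E(G)$, while $uv\in E(G)$ is required directly, and since the construction includes no edge of the form $uu'$ we must have $x\notin\{u,v\}$, producing a triangle $\{u,v,x\}$ in $G$ and contradicting the inductive hypothesis.

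There is no substantive obstacle: both key ingredients, preservation of triangle-freeness and the chromatic increment by exactly one under Mycielski, are already recorded in the excerpt. The only point that deserves explicit attention is precisely the ``two originals plus one twin'' subcase above, where one needs to use that $u$ is not adjacent to its own twin $u'$ in order to pull back a triangle of $M(G)$ to a genuine triangle of $G$.
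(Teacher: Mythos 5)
Your proof is correct and is the standard argument; the paper itself does not prove Theorem~\ref{my} but simply cites it from Mycielski's work, so there is no in-paper proof to compare against. Your induction via the Mycielskian (base cases $K_1$, $K_2$; triangle-freeness preserved; chromatic number incremented by Theorem~\ref{br1}) is exactly the skeleton the authors reuse in their own proof of the signed analogue, Theorem~\ref{ag5}, and your case analysis for triangle-freeness --- in particular the ``two originals plus one twin'' case using the absence of edges $uu'$ --- is the right and complete justification of observation~(iii).
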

The next theorem is an analogous result for balanced signed graphs.
\begin{thm}\label{ag5}
	For any positive integer $n$, there exists a balanced
	triangle-free signed graph that is not all-positive, and  having chromatic number $n$.
\end{thm}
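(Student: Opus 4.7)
The plan is to leverage Mycielski's Theorem~\ref{my}, which guarantees the existence of a triangle-free graph of any prescribed chromatic number, and to equip such a graph with a carefully chosen signature so that balance, triangle-freeness, and being not all-positive all hold simultaneously. The key observation, built into Definition~\ref{def2}, is that the chromatic number of a balanced signed graph equals the chromatic number of its underlying unsigned graph; once balance is established the chromatic number comes for free.

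Assuming $n \geq 2$ (the case $n = 1$ is degenerate and discussed at the end), I would first invoke Theorem~\ref{my} to pick a triangle-free graph $G$ with $\chi(G) = n$. Since $n \geq 2$, $G$ contains at least one edge, so there exists a vertex $v_0 \in V(G)$ of positive degree. Define a signature $\sigma: E(G) \to \{-1,+1\}$ by setting $\sigma(e) = -1$ whenever $e$ is incident with $v_0$, and $\sigma(e) = +1$ otherwise. Let $\Sigma = (G, \sigma)$.

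Verifying that $\Sigma$ meets the four requirements is straightforward. Triangle-freeness is inherited directly from $G$. Non-all-positivity holds because at least one negative edge has been introduced at $v_0$. Balance follows by switching with $\zeta(v_0) = -1$ and $\zeta(v) = +1$ for all $v \neq v_0$: this flips precisely the edges incident with $v_0$ and so yields the all-positive signed graph on $G$, so $\Sigma$ is switching-equivalent to an all-positive signed graph and therefore balanced. Finally, the chromatic number satisfies $\chi(\Sigma) = \chi(G) = n$ by the remark at the end of Definition~\ref{def2}.

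No serious obstacle arises: once Mycielski's theorem is at hand and balance is confirmed via a one-vertex switching, everything else is immediate. The only minor caveat is the case $n = 1$, where $\chi(G) = 1$ forces $G$ to be edgeless and hence leaves no room for a non-all-positive signature; this case must either be excluded by restricting to $n \geq 2$ or handled by convention, but it does not affect the conceptual content of the theorem.
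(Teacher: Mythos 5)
Your proof is correct, but it takes a genuinely different route from the paper's. You invoke the classical Mycielski theorem (Theorem~\ref{my}) to get an unsigned triangle-free graph $G$ with $\chi(G)=n$, then impose a signature that is negative precisely on the edges at one vertex $v_0$; a single switching at $v_0$ establishes balance, and the fact that a balanced signed graph has the same chromatic number as its underlying graph (noted after Definition~\ref{def2}) finishes the argument. The paper instead argues by induction entirely inside the signed-graph framework: starting from $\Sigma_1=K_1$ and $\Sigma_2=K_2^-$, it sets $\Sigma_{k+1}=M_B(\Sigma_k)$ and combines Proposition~\ref{p1} (which gives $\chi(M_B(\Sigma))=\chi(\Sigma)+1$) with the observation that the Mycielskian of a triangle-free graph is triangle-free. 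Your argument is shorter and exposes the statement as essentially a corollary of the unsigned theorem plus the balance-invariance of the chromatic number; the paper's argument is designed to showcase the balanced Mycielskian $M_B$ as a signed analogue of Mycielski's construction and produces the explicit iterated examples of Figure~\ref{fig:test3}. The degenerate case $n=1$ that you flag affects the paper equally: its $\Sigma_1=K_1$ has no edges and is therefore only vacuously ``not all-positive,'' so your caveat is not a defect of your proof relative to the published one.
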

\begin{proof}
	The proof is based on mathematical induction. For $n=1$ and $n=2$, the signed graphs $\Sigma_1=K_1$ and $\Sigma_2=K_2^-$, where $K_2^-$ is the all-negative signed complete graph on two vertices have the required property. Suppose that for $k>2$, such a signed graph $\Sigma_k$ satisfying the induction hypothesis exist. Then $M_B(\Sigma_k)$ is a balanced signed graph that is not all-positive. Also, by Proposition~\ref{p1}, we have, $\chi(\Sigma_{k+1})=\chi(\Sigma_{k})+1=k+1$.
\end{proof}
The first four signed graphs mentioned in Theorem~\ref{ag5} are shown in Figure~\ref{fig:test3}.
	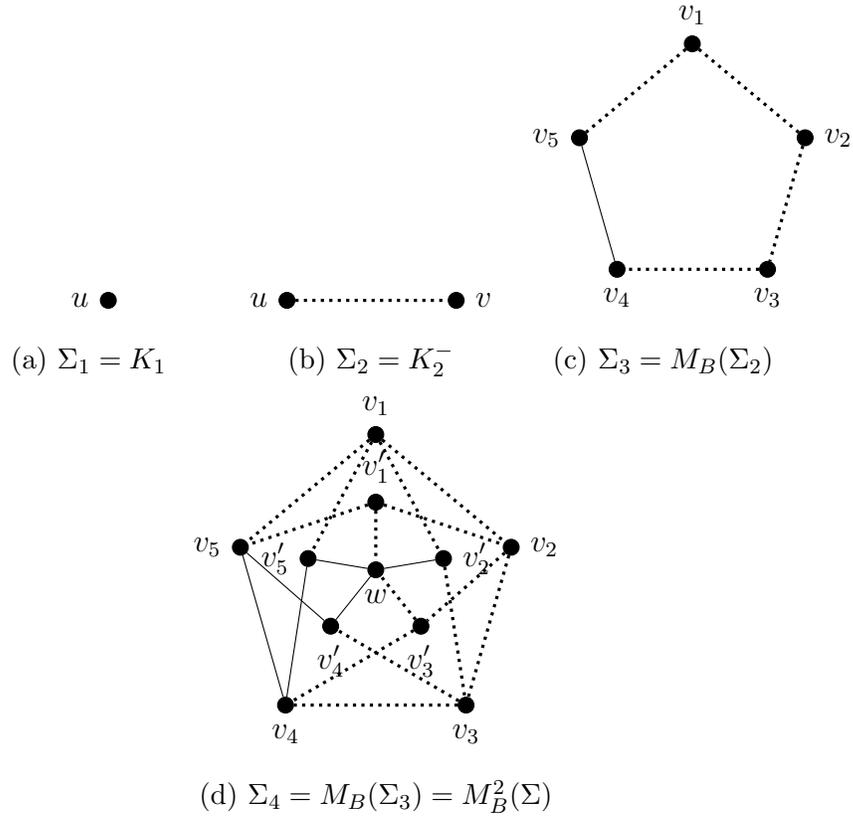
\begin{figure}[h!]
		\centering
		\begin{subfigure}{.25\textwidth}
			\centering
			\begin{tikzpicture}[x=0.6cm, y=0.6cm]
				\vertex[fill] (u) at (0,0)
				[label=left:$u$] {};
				;
			\end{tikzpicture}
			\caption{$\Sigma_1=K_1$}
			\label{fig:sub5}
		\end{subfigure}%
	\begin{subfigure}{.25\textwidth}
			\centering
			\begin{tikzpicture}[x=0.75cm, y=0.75cm]
				\vertex[fill] (u) at (0,0)
				 [label=left:$u$] {};
				 \vertex[fill] (v) at (3,0) [label=right:$v$] {};
				\path[very thick, dotted]
				(u) edge (v)
				;
			\end{tikzpicture}
			\caption{$\Sigma_2=K_2^-$}
			\label{fig:sub6}
		\end{subfigure}
		\begin{subfigure}{.25\textwidth}
			\centering
	\begin{tikzpicture}[x=0.5cm, y=0.5cm]
	\vertex[fill] (v1) at (0,2.5) [label=above:$v_1$] {};
	\vertex[fill] (v2) at (3,0) [label=right:$v_2$] {};
	\vertex[fill] (v3) at (2,-3.5) [label=below:$v_3$] {};
	\vertex[fill] (v4) at (-2,-3.5) [label=below:$v_4$] {};
	\vertex[fill] (v5) at (-3,0) [label=left:$v_5$] {};
	\path[very thick, dotted]
	(v1) edge (v2)
	(v2) edge (v3)
	(v3) edge (v4)
	(v1) edge (v5)
	;
	\path
	(v4) edge (v5)

	;
\end{tikzpicture}
\caption{$\Sigma_3=M_B(\Sigma_2)$}
\label{fig:sub7}
\end{subfigure}\\
\begin{subfigure}{.5\textwidth}
			\centering
			\begin{tikzpicture}[x=0.6cm, y=0.6cm]
				\vertex[fill] (v1) at (0,2.5) [label=above:$v_1$] {};
				\vertex[fill] (v2) at (3,0) [label=right:$v_2$] {};
				\vertex[fill] (v3) at (2,-3.5) [label=below:$v_3$] {};
				\vertex[fill] (v4) at (-2,-3.5) [label=below:$v_4$] {};
				\vertex[fill] (v5) at (-3,0) [label=left:$v_5$] {};
				\vertex[fill] (v1') at (0,1) [label=above:$v_1'$] {};
				\vertex[fill] (v2') at (1.5,-0.25) [label=right:$v_2'$] {};
				\vertex[fill] (v3') at (1,-1.75) [label=below:$v_3'$] {};
				\vertex[fill] (v4') at (-1,-1.75) [label=below:$v_4'$] {};
				\vertex[fill] (v5') at (-1.5,-0.25) [label=left:$v_5'$] {};
				\vertex[fill] (w) at (0,-0.5) [label=below:$w$] {};

				\path[very thick, dotted]
				(v1) edge (v2)
				(v2) edge (v3)
				(v3) edge (v4)
				(v1) edge (v5)
				(v1') edge (v2)
				(v2') edge (v3)
				(v3') edge (v4)
				(v1') edge (v5)
				(v1) edge (v2')
				(v2) edge (v3')
				(v3) edge (v4')
				(v1) edge (v5')
				(v1') edge (w)
				(v3') edge (w)
				;
				\path
				(v4) edge (v5)
				(v4') edge (v5)
				(v4) edge (v5')
				(v2') edge (w)
				(v4') edge (w)
				(v5') edge (w)
				;
			\end{tikzpicture}
			\caption{$\Sigma_4=M_B(\Sigma_3)=M^2_B(\Sigma)$}
			\label{fig:sub8}
		\end{subfigure}
		\caption{Iterated balanced Mycielskians.}
		\label{fig:test3}
	\end{figure}
\section{Matrices of the Mycielskian of signed graphs}\label{sec4}

Given a signed graph $\Sigma=(V,E,\sigma)$ where  $V = \{v_1,v_2,\dots,v_p\}$ is the vertex set, $E=\{e_1,e_2,\dots,e_q\}$ is the edge set and $\sigma:E\rightarrow \{-1,1\}$ is the sign function.
Let $M(\Sigma)$ be the Mycielskian of $\Sigma$.  In this section, we introduce the adjacency matrix, the incidence  matrix and the Laplacian matrix of the Mycielskian $M(\Sigma)$ of $\Sigma$.
\subsection{ The adjacency matrix}

The adjacency matrix of $\Sigma$, denoted by $\textbf{A}=\textbf{A}(\Sigma)$, is a $p\times p$ matrix $(a_{ij})$ in which $a_{ij}=\sigma(v_iv_j)$ if $v_i$ and $v_j$ are adjacent and $0$ otherwise (see \cite[Section 3]{tz1}).

Since $v_i$ is adjacent to $v_j'$ and $v_i'$ is adjacent to $v_j$ in $M(\Sigma)$ whenever $v_i$ and $v_j$ are adjacent in $\Sigma$, \textbf{the adjacency matrix} $\textbf{A}_\textbf{M}$  of the Mycielskian  $M(\Sigma)$ takes the block form $$\textbf{A}_\textbf{M}=\textbf{A}(M(\Sigma))=
\begin{bmatrix}
	\textbf{A}(\Sigma)&\textbf{A}(\Sigma) & \textbf{0}_{p\times1} \\
	\textbf{A}(\Sigma)&  \textbf{0}_{p\times p}&  \textbf{j}_{p\times1}\\
	\textbf{0}^t_{1\times p}&  \textbf{j}^t_{1\times p}& 0
\end{bmatrix}$$ where $\textbf{0}$ is a matrix of zeros and $\textbf{j}$ is a matrix of ones of the specified order.\\
 $\textbf{A}_\textbf{M}$ is a symmetric matrix of order $2p+1$.

Given a graph $G$ with adjacency matrix $A(G)$, the connection between the ranks of $A(G)$ and $A(M(G))$, the connection between the number of positive, negative and zero eigenvalues $A(G)$ and $A(M(G))$ were studied by Fisher \textit{et al.} in \cite{fmb}. We initiate a similar study in the case of signed graphs.

 	Let $\Sigma=(V,E,\sigma)$ be a given signed graph and let $t\notin V$. We denote the signed graph obtained by joining all the vertices of $\Sigma$ to $t$ with negative edges by $\Sigma_{t^-}$. That is, $\Sigma_{t^-}$ is the negative join $\Sigma\vee_- K_1$. The adjacency matrix of $\Sigma_t$ takes the block form
 	$$\textbf{A}_{t^-}=\textbf{A}(\Sigma_{t^-})=\begin{bmatrix}
 		\textbf{A}&\textbf{-j}  \\
 		\textbf{-j}^t& 0
 	\end{bmatrix}$$
 We now have the following theorem.
  \begin{thm}
 	Let $\Sigma$ be a signed graph and $\textbf{A}(\Sigma)$ be the adjacency matrix of $\Sigma$. Let  $r(\textbf{A})$ denote the rank and $n_+(\textbf{A})$, $n_-(\textbf{A})$ and $n_0(\textbf{A})$  respectively denote the number of positive, negative and zero eigenvalues of a symmetric matrix \textbf{A}, then we have the following.
 	\begin{itemize}
 		\item [$(i)$] $r(\textbf{A}_\textbf{M})=r(\textbf{A})+r(\textbf{A}_{t^-})$
 		\item [$(ii)$] $n_+(\textbf{A}_\textbf{M})=n_+(\textbf{A})+n_+(\textbf{A}_{t^-})$
 		\item [$(iii)$] $n_-(\textbf{A}_\textbf{M})=n_-(\textbf{A})+n_-(\textbf{A}_{t^-})$
 		\item [$(iv)$] $n_0(\textbf{A}_\textbf{M})=n_0(\textbf{A})+n_0(\textbf{A}_{t^-})$
 	\end{itemize}
 \end{thm}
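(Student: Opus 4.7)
The plan is to prove all four items in a single stroke by exhibiting an explicit congruence that block-diagonalizes $\textbf{A}_\textbf{M}$ into pieces matching $\textbf{A}$ and $\textbf{A}_{t^-}$, and then to invoke Sylvester's law of inertia. Sylvester says that congruent symmetric matrices share the same inertia triple $(n_+, n_-, n_0)$ and hence the same rank; since $n_\pm$, $n_0$, and rank are each additive across a direct sum of symmetric blocks, a clean block-diagonal decomposition delivers (i)--(iv) simultaneously.

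The most natural congruence respects the $(p,p,1)$ block structure of $\textbf{A}_\textbf{M}$. The key observation is that the $(1,1)$ and $(1,2)$ blocks both equal $\textbf{A}$, so subtracting block column one from block column two (with the symmetric row operation) should kill the $(1,2)$ block and decouple the first diagonal block entirely. I would therefore try
\[ P \;=\; \begin{bmatrix} \textbf{I}_p & -\textbf{I}_p & \textbf{0} \\ \textbf{0} & \textbf{I}_p & \textbf{0} \\ \textbf{0}^t & \textbf{0}^t & 1 \end{bmatrix}, \]
and carry out a routine blockwise multiplication of $P^T \textbf{A}_\textbf{M} P$. The expected output is $\textbf{A} \oplus \textbf{B}$, where $\textbf{B}$ is a $(p+1)\times(p+1)$ bordered matrix built from $-\textbf{A}$ and $\textbf{j}$. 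A second diagonal congruence by $\operatorname{diag}(-\textbf{I}_p, 1)$ then converts $\textbf{B}$ into a matrix congruent to $\textbf{A}_{t^-}$, since sign-flipping the first $p$ coordinates turns $\textbf{j}$ into $-\textbf{j}$ while preserving $\textbf{A}$ on the diagonal.

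With the composed congruence $\textbf{A}_\textbf{M} \simeq \textbf{A} \oplus \textbf{A}_{t^-}$ in hand, additivity of rank and of the inertia triple across the direct sum immediately yields (i)--(iv). The main obstacle I expect is the sign bookkeeping between $\textbf{B}$ and $\textbf{A}_{t^-}$: although each individual congruence is routine, one must verify that the composition lands on $\textbf{A}_{t^-}$ in the second summand rather than on $-\textbf{A}_{t^-}$, since the latter would swap $n_+$ and $n_-$ in parts (ii) and (iii). A small example such as $\Sigma = K_2^-$ gives a quick sanity check that the inertia counts in the composite congruence agree with the stated formula before committing to the full chain of identifications.
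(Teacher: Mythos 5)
Your overall strategy coincides with the paper's: a block congruence splitting $\textbf{A}_\textbf{M}$ into $\textbf{A}$ plus a bordered $(p+1)\times(p+1)$ block, followed by Sylvester's law of inertia (your $P$ is just the transposed-inverse variant of the paper's $\textbf{P}$). Your first step is correct: $P^{t}\textbf{A}_\textbf{M}P=\textbf{A}\oplus\textbf{B}$ with $\textbf{B}=\left[\begin{smallmatrix}-\textbf{A}&\textbf{j}\\ \textbf{j}^{t}&0\end{smallmatrix}\right]$. But note that $\textbf{B}$ is \emph{exactly} $-\textbf{A}_{t^-}$, and this is precisely the failure mode you flagged and deferred. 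Your proposed repair, conjugating by $\diag(-\textbf{I}_p,1)$, turns $\textbf{j}$ into $-\textbf{j}$ but leaves the corner block equal to $-\textbf{A}$, not $\textbf{A}$; no diagonal sign congruence (indeed no congruence at all, unless $\textbf{A}_{t^-}$ has symmetric inertia) can convert $-\textbf{A}_{t^-}$ into $\textbf{A}_{t^-}$, since that would amount to negating the whole matrix. Your own suggested sanity check exposes this: for $\Sigma=K_2^-$, $M(\Sigma)$ is the unbalanced $5$-cycle with inertia $(n_+,n_-,n_0)=(2,3,0)$, whereas $\Sigma_{t^-}$ is the all-negative triangle with eigenvalues $-2,1,1$, so $n_+(\textbf{A})+n_+(\textbf{A}_{t^-})=1+2=3\neq 2$.

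Consequently your decomposition does prove (i) and (iv), since rank and nullity are invariant under negation, but (ii) and (iii) come out as $n_+(\textbf{A}_\textbf{M})=n_+(\textbf{A})+n_-(\textbf{A}_{t^-})$ and $n_-(\textbf{A}_\textbf{M})=n_-(\textbf{A})+n_+(\textbf{A}_{t^-})$; equivalently, the clean statement replaces $\Sigma_{t^-}$ by $(-\Sigma)_{t^-}$, the negative join of the \emph{negated} signed graph. You should be aware that the paper's own proof makes the identical misidentification: its middle factor has lower-right block $\left[\begin{smallmatrix}-\textbf{A}&-\textbf{j}\\ -\textbf{j}^{t}&0\end{smallmatrix}\right]$, which is then labelled $\textbf{A}_{t^-}$ even though it equals $-\textbf{A}(\Sigma_{t^+})$ and is only similar to $-\textbf{A}_{t^-}$. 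So your instinct that the sign bookkeeping is the crux, and that a small example must be checked before committing, was exactly right --- the gap is that you did not carry the check out, and the step ``converts $\textbf{B}$ into a matrix congruent to $\textbf{A}_{t^-}$'' is false as written.
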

\begin{proof}
	 The adjacency matrix $\textbf{A}_\textbf{M}$ can be factorized as
	$$\textbf{A}_\textbf{M}=
	\begin{bmatrix}
		\textbf{A}&\textbf{A}& \textbf{0} \\
		\textbf{A}& \textbf{0}&  \textbf{j}\\
		\textbf{0}^t&  \textbf{j}^t& 0
	\end{bmatrix} = \begin{bmatrix}
		\textbf{I}& \textbf{0} & \textbf{0}\\
		\textbf{I}&  \textbf{-I}& \textbf{0}\\
		\textbf{0}&  \textbf{0}^t& 1
	\end{bmatrix}  \begin{bmatrix}
		\textbf{A}&\textbf{0}& \textbf{0}\\
		\textbf{0}& \textbf{-A}&  \textbf{-j}\\
		\textbf{0}^t&  \textbf{-j}^t& 0
	\end{bmatrix}  \begin{bmatrix}
		\textbf{I}&\textbf{I}& \textbf{0}\\
		\textbf{0}&  \textbf{-I}&  \textbf{0}\\
		\textbf{0}^t&  \textbf{0}^t& 1
	\end{bmatrix}=\textbf{P} \, \textbf{B} \, \textbf{P}^t$$

	where, $\textbf{P}=\begin{bmatrix}
		\textbf{I}& \textbf{0} & \textbf{0}\\
		\textbf{I}&  \textbf{-I}& \textbf{0}\\
		\textbf{0}&  \textbf{0}^t& 1
	\end{bmatrix}$ is an invertible matrix and $\textbf{B}=\begin{bmatrix}
		\textbf{A}& \textbf{0} \\
		\textbf{0}& \textbf{A}_{t^-}
	\end{bmatrix}$.

	Thus, the matrices $\textbf{A}_\textbf{M}$ and $\textbf{B}$ are congruent, and hence by Sylvester's law of inertia, they have the same rank and the same number of positive, negative and zero eigenvalues.
\end{proof}
 \subsection{The incidence matrix}
The incidence matrix of $\Sigma$, denoted by $\textbf{H}=\textbf{H}(\Sigma)$, is  the $p\times q$ matrix
$$\textbf{H}(\Sigma)=\begin{bmatrix}
	\textbf{x}(e_1)& \textbf{x}(e_2) &  \cdots&\textbf{x}(e_q)
\end{bmatrix}$$ where, for each edge $e_k=v_iv_j$, $1\leq k\leq q$, the vector $\textbf{x}(e_k)=\begin{pmatrix}
	x_{1k}\\
	\vdots	\\
	x_{pk}
\end{pmatrix} \in \mathbb{R}^{p\times1}$ has its $i^{\text{th}}$ and $j^{\text{th}}$ entries as $x_{ik}=\pm 1$ and $x_{jk}=\mp
\sigma(e_k)$ respectively and  all other entries as $0$ (see \cite[Section 3]{tz1}).

Let us denote  the vertex set $V_M$ and the edge set $E_M$ of $M(\Sigma)$ as $$V_M=\{v_1,v_2,\dots, v_p,v_1', v_2',\dots, v_p',w\}$$ $$E_M=\{e_1,e_2, \dots, e_q,e_1',e_1'',e_2',e_2'',\dots, e_q',e_q'',f_1,f_2,\cdots,f_p \}$$ respectively, where, for each $1\leq k\leq q$, the edges $e_k'$ and $e_k''$ of $M(\Sigma)$ are defined by $e_k'=v_iv_j'$ and $e_k''=v_i'v_j$ whenever $e_k=v_iv_j$ is an edge of $\Sigma$ with $1\leq i < j\leq q$ and $f_i$ is defined by $f_i=v_i'w$ for $1\leq i \leq p$. Then, the \textbf{incidence matrix} $\textbf{H}_\textbf{M}=\textbf{H}(M(\Sigma))$ takes the block form
$$
\textbf{H}_\textbf{M}=\textbf{H}(M(\Sigma))=\left[
\begin{array}{c|c|c|c|c|c|c|c|c}
	\textbf{H}(\Sigma)_{p\times q} &\textbf{x}_\textbf{1}&\textbf{y}_\textbf{1}&\textbf{x}_\textbf{2}&\textbf{y}_\textbf{2}&\cdots&\textbf{x}_\textbf{p}&\textbf{y}_\textbf{p} & \textbf{0}_{p\times p}\\ \hhline{--------|-}
	\textbf{0}_{p\times q} &\textbf{y}_\textbf{1}&\textbf{x}_\textbf{1}&\textbf{y}_\textbf{2}&\textbf{x}_\textbf{2}&\cdots&\textbf{y}_\textbf{p}&\textbf{x}_\textbf{p} & \textbf{I}_{p\times p}\\ \hhline{--------|-}
	\multicolumn{8}{c|}{\textbf{0}_{1\times 3q}} &\textbf{-j}_{1\times p} \\ \hhline{|~}
\end{array}\right]
$$

Here, $	\textbf{H}(\Sigma)$ is the incidence matrix of $\Sigma$, $\textbf{I}$ is the identity matrix, $\textbf{0}$ is the zero matrix and $\textbf{-j}$ is the matrix with all entries $-1$ of the specified order.   $\textbf{x}_\textbf{i}$'s and $\textbf{y}_\textbf{i}$'s are matrices of order $p\times 1$ and satisfies the condition $\textbf{x}_\textbf{i}+\textbf{y}_\textbf{i}=\textbf{x}(e_i)$ for all $1\leq i\leq q$.

\subsection{The Laplacian matrix}
The Laplacian matrix of $\Sigma$,  denoted by $\textbf{L}=\textbf{L}(\Sigma)$ is the $p\times p$ matrix   $$\textbf{L}(\Sigma) = \textbf{D}(| \Sigma|)-\textbf{A}(\Sigma)$$ where $\textbf{A}(\Sigma)$ is the adjacency matrix of $\Sigma$ and $\textbf{D}(|\Sigma|)$ is the degree matrix of the underlying graph $|\Sigma|$ (see \cite[Section 3]{tz1}).

Accordingly, we define the \textbf{Laplacian matrix for the Mycielskian} of $\Sigma$  as
$$\textbf{L}_\textbf{M}=\textbf{L}(M(\Sigma))= \textbf{D}(|M(\Sigma)|)-\textbf{A}(M(\Sigma))=\textbf{D}_\textbf{M}-\textbf{A}_\textbf{M}$$
where, $\textbf{A}_\textbf{M}$ is the adjacency matrix and $\textbf{D}_\textbf{M}$ is the diagonal degree matrix of the Mycielskian of $\Sigma$. Now, $\textbf{D}_\textbf{M}$ takes the block form
$$\textbf{D}_\textbf{M}=\begin{bmatrix}
	2\textbf{D}(|\Sigma|)_{p\times p}& \textbf{0}_{p\times p}  &\textbf{0}_{p\times 1}  \\
	\textbf{0}_{p\times p}&(\textbf{D}(|\Sigma|)+\textbf{I})_{p\times p}  &\textbf{0}_{p\times 1}  \\
	\textbf{0}^t_{1\times p}& \textbf{0}^t_{1\times p} & p
\end{bmatrix}$$
where $p=|V|$, $\textbf{D}(\Sigma)$ is the diagonal degree matrix of $\Sigma$, \textbf{I} is the identity matrix and  \textbf{0} is the zero matrix of the specified order.

Consequently, the Laplacian matrix $\textbf{L}_\textbf{M}=\textbf{L}(M(\Sigma))$ takes the block form $$\textbf{L}_\textbf{M}=\begin{bmatrix}
	(2\textbf{D}(|\Sigma|)-\textbf{A}(\Sigma))_{p\times p}	& \textbf{-A}(\Sigma)_{p\times p} & \textbf{0}_{p\times 1}  \\
	\textbf{-A}(\Sigma)_{p\times p}&(\textbf{D}(|\Sigma|)+\textbf{I})_{p\times p}  & \textbf{-j}_{p\times 1}  \\
	\textbf{0}^t_{1\times p}&  \textbf{-j}^t_{1\times p}& p
\end{bmatrix}$$

\section{Conclusion and Scope}
In this paper, we have defined the Mycielskian of a signed graph and discussed some of its properties. We have seen that the Mycielskian of a balanced signed graph need not be balanced and hence we provide an alternate construction in which the Mycielskian of $\Sigma$ is balanced whenever $\Sigma$ is balanced, This paper also discuss the chromatic number of the Mycielskian of a signed graph and established that the chromatic number of a signed graph and its Mycielskian are related.  We also established the block forms of various matrices of the Mycielskian of a signed graph such as the adjacency matrix, the incidence matrix and the Laplacian matrix. Developing another balance preserving, switching preserving  constructions for Mycielskian of  signed graphs, computing the spectrum of the Mycielskian of signed graphs for various matrices are some interesting areas for further investigation.
\section*{Acknowledgments}
The first author would like to acknowledge his gratitude to the University Grants Commission (UGC), India, for providing financial support in the form of Junior Research fellowship (NTA Ref. No.: 191620039346). The authors express their sincere gratitude to Professor Thomas Zaslavsky, Binghamton University (SUNY), Binghamton, for valuable discussions and guidance throughout the process.


\begin{thebibliography}{99}
  \bibitem{br} R. Balakrishnan, K. Ranganathan, {\bf A Textbook of Graph Theory},  Springer, 2012. \ \doi{10.1007/978-1-4614-4529-6}.
  \bibitem{fmb} D. C. Fisher, P. A. McKenna, E. D. Boyer, Hamiltonicity,  diameter, domination, packing, and biclique partitions of  Mycielski's graphs, \textit{Discrete Appl.\ Math. }\ \textbf{84} (1998) 93--105. \ \doi{10.1016/S0166-218X(97)00126-1}.
  \bibitem{fh} F. Harary, {\bf Graph Theory},  Addison Wesley, Reading, Mass., 1969.
 \bibitem{fh1} F. Harary, On the notion of balance of a signed graph, \textit{Michigan Math.\ J.}\ \textbf{2} (1953) 143--146. \ \doi{10.1307/mmj/1028989917}.
  \bibitem{em} E. Máčajová, A.  Raspaud, M. Škoviera, The chromatic number of a signed graph., \textit{Electron. J. Combin.}\ \textbf{23}(1) (2016) (article P1.14). \ \doi{10.37236/4938}
  \bibitem{arwm} A.\ Mattern, Deficiency in Signed Graphs, \textit{Submitted, arXiv:2005.14336}\  \doi{10.48550/arXiv.2005.14336}.
    \bibitem{my} J. Mycielski, Sur le colouriage des graphes., \textit{Colloq. Math.}\ \textbf{3} (1955) 161-162.

 \bibitem{tz} T.\ Zaslavsky, Signed graphs,  \textit{Discrete Appl.\ Math.}\ \textbf{4} (1982) 47--74.  Erratum,  \textit{Discrete Appl.\ Math.}\ \textbf{5} (1983) 248. \ \doi{10.1016/0166-218X(82)90033-6}.
 \bibitem{tz1} T.\ Zaslavsky, Signed graphs and geometry,  \textit{J. Combin. Inform. System Sci.} \textbf{37}(2-4) (2012) 95--143\ \  \doi{10.48550/arXiv.1303.2770}.
 \bibitem{tz2} T.\ Zaslavsky, Signed graph coloring,\ \textit{Discrete Mathematics}\ \textbf{39}(2) (1982) 215-228. \ \doi{10.1016/0012-365X(82)90144-3}.
  \bibitem{tz4} T.\ Zaslavsky, Chromatic invariants of signed graphs,\ \textit{Discrete Mathematics}\ \textbf{42}(2-3) (1982) 287-312. \ \doi{10.1016/0012-365X(82)90225-4}.
 \bibitem{tz3} T.\ Zaslavsky,  A mathematical bibliography of signed and gain graphs and allied areas.  {\it Electronic J. Combinatorics}, Dynamic Surveys \#8 (1998). \ \doi{10.37236/29}.

\end{thebibliography}
\end{document}